%
%
%
%
%
%
\documentclass{svjour3}                     
\smartqed  

\usepackage{setspace}
\usepackage{algorithm}
\usepackage{algorithmic}

\usepackage{amssymb}
\usepackage{booktabs}
\usepackage{graphicx,fancyhdr}
\usepackage{graphics,color}
\usepackage{subfigure}
\usepackage{booktabs}
\usepackage{graphicx}
\usepackage{amsmath}
\usepackage{graphicx}
\usepackage{amsmath}
\usepackage{epstopdf} 
\usepackage{bbding}
\usepackage{mathptmx}
\usepackage{algorithm}
\usepackage{algorithmic}
\usepackage{color}
\numberwithin{equation}{section}
\numberwithin{theorem}{section}
\numberwithin{lemma}{section}
\numberwithin{remark}{section}

\usepackage{mathptmx}
\begin{document}

\title{A second-order accurate scheme for two-dimensional  space fractional   diffusion  equations with  time   Caputo-Fabrizio fractional derivative}



\author{Jiankang Shi         \and
         Minghua Chen
}

\institute{
J. Shi    \and     M. Chen   (\Envelope)    \\
School of Mathematics and Statistics, Gansu Key Laboratory of Applied Mathematics and Complex Systems, Lanzhou University, Lanzhou 730000, P.R. China\\
email:  shijk17@lzu.edu.cn; chenmh@lzu.edu.cn; }
\date{Received: date / Accepted: date}

\maketitle

\begin{abstract}
We provide and analyze a  second order   scheme for the model describing
the functional distributions of particles performing anomalous motion with exponential Debye pattern  and no-time-taking jumps eliminated,
and  power-law  jump length. The model is derived in [M. Chen, J. Shi, W. Deng, arXiv:1809.03263], being called the space fractional   diffusion  equation with the time   Caputo-Fabrizio fractional derivative.
The designed schemes are unconditionally stable and have the second order global truncation error with  the nonzero initial condition,
being theoretically proved and numerically verified by two methods (a prior estimate with $L^2$-norm and mathematical induction with $l_\infty$ norm). Moreover,  the optimal estimates are obtained.

\keywords{Caputo-Fabrizio fractional derivative \and Stability and convergence \and  Optimal estimates}
\end{abstract}

\section{Introduction}

The  Caputo-Fabrizio fractional derivative  \cite{Caputo:15}  has been used to model a variety of applied scientific phenomena, such as control systems \cite{Alkahtani:16},  physics \cite{Abdulhameed:19,Atanackovie:18,Atangana:18},
 medicine \cite{Ullah:18}, fluid dynamics \cite{Abdulhameed:17,Asif:18,Mahsud:18,Shah:16}.
It is able to describe  the material heterogeneities and the fluctuations of   different scales.
Based on the continuous time random walk  with exponential Debye pattern  and no-time-taking jumps eliminated,
and  power-law  jump length, i.e., taking the waiting time distribution is
$\sigma\left(1+\sigma-\delta (t) \right)\exp(-\sigma t)$, $\sigma=\gamma/(1-\gamma)$ and the jump length distribution is $|x|^{-1-\alpha}$,
we derive  the following  space fractional   diffusion  equation with the time   Caputo-Fabrizio fractional operator \cite{CSD:18}
\begin{equation} \label{1.1}
\left\{ \begin{array}
 {l@{\quad} l}
 {^{CF}_0 D^\gamma _t}u(x,y,t)= \displaystyle \frac{\partial ^{\alpha}u(x,y,t)}{\partial |x|^{\alpha}}+\frac{\partial ^{\beta}u(x,y,t)}{\partial |y|^{\beta}}+f(x,y,t),\\
  u(x,y,0)=u_0(x,y) ~~~~\, {\rm for}~~~ (x,y) \in \Omega,\\
   u(x,y,t)=0 ~~~ {\rm for}~~~ (x,y,t) \in \partial \Omega \times [0,T],
 \end{array}
 \right.
\end{equation}
on a finite rectangular  domain $\Omega=(0,x_R) \times (0,y_R)$ and $ 0< t \leq T$. The  Caputo-Fabrizio fractional derivative,  for $0<\gamma<1$,  is defined by  \cite{Caputo:15,CSD:18}

\begin{equation} \label{1.2}
\begin{split}
{^{CF}_0 D^\gamma _t}u(t)
& = \frac{1}{1-\gamma}\int_{0}^t{u' \left(s\right) e^{-\frac{\gamma}{1-\gamma}(t-s)}} ds
 = \frac{1}{1-\gamma}\int_{0}^t{u' \left(s\right) e^{-\sigma(t-s)}} ds,~~\sigma=\frac{\gamma}{1-\gamma}.
\end{split}
\end{equation}
 The Riesz fractional derivative is given in  \cite{Podlubny:99}

\begin{equation} \label{1.3}
\frac{\partial^\alpha u(x,t) }{\partial |x|^\alpha}= \kappa_{\alpha} \left(_0 D ^{\alpha}_x +_x D ^{\alpha}_{x_R}\right)u(x,t),~~\kappa_{\alpha}=-\frac{1}{2\cos(\alpha\pi/2)}>0, ~1<\alpha<2
\end{equation}
with

\begin{equation*}
 _{0} D_x ^{\alpha} u(x,t) =
  \frac{1}{\Gamma(2-\alpha)}  \frac{\partial^2}{\partial x^2} \int_{0}^x {\left(x-\xi\right)^{1-\alpha}} {u(\xi,t)}d\xi,
\end{equation*}
\begin{equation*}
 _{x} D_{x_R} ^{\alpha} u(x,t) =
  \frac{1}{\Gamma(2-\alpha)}  \frac{\partial^2}{\partial x^2} \int_{x}^b {\left(\xi-x \right)^{1-\alpha}} {u(\xi,t)}d\xi.
\end{equation*}

In recently years, numerical method for solving the Caputo-Fabrizio fractional derivative  \cite{Caputo:15} is experiencing rapid development.
For example, the new operational matrix together with Tau method has been used to solve the equation with Caputo-Fabrizio operator \cite{Loh:18}.
Numerical approach of Fokker-Planck equation with Caputo-Fabrizio fractional derivative  is discussed in \cite{Firoozjaee:18}.
A second-order  Crank-Nicolson scheme \cite{Zhao:14} of the time fractional Caputo-Fabrizio derivative with $1<\gamma<2$ is proposed in \cite{Liu:17};
and the stability analysis of the numerical scheme for the groundwater model with Caputo-Fabrizio operator are proven in \cite{Djida:17}.
 Using the Lubich's operator \cite{Lubich:86} and the discretized fractional substantial calculus \cite{CD:14,CD:15}, the stability of the second-order scheme for Caputo-Fabrizio fractional equation are  proved \cite{CSD:18} by  a priori estimate \cite{Ji:15} under the zero initial condition.
Based on the idea of L1 formula \cite{Lin:07,Oldham:74}, a numerical approximation to the Caputo-Fabrizio derivative by using a linear interpolation is provided \cite{Akman:18};
and the first-order convergence analyse are given in \cite{Atangana:16}. It seems that achieving a second-order accurate scheme for L1 formula is not an easy task.
This paper focused on providing effective and a second-order accurate scheme for (\ref{1.1}). Under the nonzero initial condition,
the numerical stability and convergence of the  L1 scheme with second-order accuracy are theoretically proved by two ways (a prior estimate with $L^2$-norm and mathematical induction with $L_\infty$ norm); and the optimal estimates are obtained.

The paper is organized as follows. In Section 2, we provide the approximation scheme to  the Caputo-Fabrizio fractional derivative, and the full discretization of (\ref{1.1}).
 In Section 3, the unconditionally stability and convergence of the numerical schemes are proved in detail. In Section 4, we use the numerical example to verify the unconditionally stability and the convergence order of the difference schemes.
 Finally, we conclude the paper with some remarks.

\section{Discretization schemes}
Let the mesh points $x_i=i\Delta x$, $i=0,1,2,\cdots,N_x$, $y_j=j\Delta y$, $j=0,1,2,\cdots,N_y$ and $t_n=n\tau$, $n=0,1,2,\cdots,N$,
where $\Delta x=\frac{x_{R}}{N_x}$, $\Delta y=\frac{y_{R}}{N_y}$ and $\tau=\frac{T}{N}$ are the uniform  space stepsize and  time steplength, respectively.
Denote  $u^{n}_{i,j}$ as the numerical approximation to $u(x_i.y_j,t_n)$ and $f^{n}_{i,j}=f(x_i,y_j,t_n)$.
\subsection{Discretized Caputo-Fabrizio fractional derivative} \label{SubSec:2.1}
In this subsection we provide a  second-order discretization L1 formula for  the  Caputo-Fabrizio fractional derivative,
although there is less than the second-order convergence for  the  Caputo fractional derivative \cite{Lin:07}.
\begin{lemma}\label{lemma2.1}
 Let $0<\gamma<1$  with $\sigma = \frac {\gamma}{1-\gamma}$. Let $u(t)$ be sufficiently smooth for $t\geq 0$. Then
 \begin{equation*}
  {^{CF}_0 D^\gamma _{t}}u(t_{n}) = \frac{1}{1-\gamma} \sum^{n}_{k=1} \frac{u(t_{k})-u(t_{k-1})}{\sigma\tau} e^{-\sigma(n-k)\tau} (1-e^{-\sigma\tau}) +\mathcal{O}\left(\tau^2\right).
\end{equation*}
\end{lemma}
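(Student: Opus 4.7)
The plan is to split the defining integral of ${^{CF}_0 D^\gamma _t}u(t_n)$ into subintervals of length $\tau$, approximate $u'(s)$ on each subinterval by the difference quotient $\bar u_k := [u(t_k)-u(t_{k-1})]/\tau$, and handle the exponential kernel exactly. Starting from
\begin{equation*}
{^{CF}_0 D^\gamma _t}u(t_n) = \frac{1}{1-\gamma}\sum_{k=1}^n \int_{t_{k-1}}^{t_k} u'(s)\, e^{-\sigma(t_n-s)}\, ds,
\end{equation*}
I would use the exact evaluation
\begin{equation*}
\int_{t_{k-1}}^{t_k} e^{-\sigma(t_n-s)}\, ds = \frac{e^{-\sigma(n-k)\tau}\bigl(1-e^{-\sigma\tau}\bigr)}{\sigma}
\end{equation*}
to observe that replacing $u'(s)$ by $\bar u_k$ inside each subinterval produces precisely the quadrature stated in the lemma. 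The task then reduces to controlling the local residual
\begin{equation*}
R_k := \int_{t_{k-1}}^{t_k}\bigl[u'(s)-\bar u_k\bigr]\, e^{-\sigma(t_n-s)}\, ds.
\end{equation*}

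Superficially $\bar u_k$ is a one-sided, first-order approximation to $u'$, so one might expect only $\mathcal{O}(\tau)$ accuracy. The key observation is that $\bar u_k$ is in fact an $\mathcal{O}(\tau^2)$ approximation to $u'$ \emph{at the midpoint} $t_{k-1/2}$, so I would Taylor-expand both $u'(s)$ and $e^{-\sigma(t_n-s)}$ about $s = t_{k-1/2}$. The leading part of $u'(s)-\bar u_k$ then splits into an odd contribution proportional to $(s-t_{k-1/2})u''(t_{k-1/2})$ and a symmetric quadratic remainder in which the $\frac12(s-t_{k-1/2})^2 u'''$ piece and the $-\tfrac{\tau^2}{24}u'''$ piece from $\bar u_k$ are arranged to cancel after integration against the constant factor $e^{-\sigma(t_n-t_{k-1/2})}$. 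Simultaneously, writing $e^{-\sigma(t_n-s)} = e^{-\sigma(t_n-t_{k-1/2})}\bigl[1+\sigma(s-t_{k-1/2})+\mathcal{O}(\tau^2)\bigr]$ kills the odd contribution against the constant factor by symmetry, while all surviving cross terms are $\mathcal{O}(\tau^3)$ per subinterval.

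Finally I would sum the residuals: using the crude bound $e^{-\sigma(t_n-s)}\leq 1$ together with the uniform bounds on $u''$ and $u'''$ supplied by the smoothness assumption, each $|R_k|\leq C\tau^3$, so $\sum_{k=1}^n |R_k| \leq C n \tau^3 \leq CT\tau^2$, which after multiplying by the constant $1/(1-\gamma)$ yields the stated $\mathcal{O}(\tau^2)$ truncation error. The main obstacle is identifying and cleanly bookkeeping the \emph{two} midpoint-symmetry cancellations simultaneously; without them one would conclude only first-order accuracy, and it is exactly this pair of cancellations that promotes the backward-difference-looking quadrature into a genuinely second-order formula.
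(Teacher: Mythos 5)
Your proposal is correct and follows essentially the same route as the paper: split the integral over the subintervals, integrate the exponential kernel exactly, and show each local residual is $\mathcal{O}(\tau^3)$ by exploiting the fact that $[u(t_k)-u(t_{k-1})]/\tau$ approximates $u'$ to second order at the midpoint $t_{k-1/2}$ (the paper implements the same cancellation via the mean value theorem on half-intervals and exact evaluation of the moment integrals $I_{3k},\dots,I_{6k}$ rather than a midpoint Taylor expansion). One minor remark: the cancellation between the $\tfrac12(s-t_{k-1/2})^2u'''$ term and the $\tfrac{\tau^2}{24}u'''$ term is not actually needed, since each is already $\mathcal{O}(\tau^3)$ per subinterval; only the odd $u''$ term requires the symmetry argument.
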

\begin{proof}
We can rewrite (\ref{1.2}) as
\begin{equation*}
\begin{split}
{^{CF}_0 D^\gamma _{t}}u(t_{n})
& = \frac{1}{1-\gamma} \int_{0}^{t_{n}}{u' \left(s\right) e^{-\sigma(t_{n}-s)}} ds  = \frac{1}{1-\gamma} \sum^{n}_{k=1} \int_{t_{k-1}}^{t_{k}} {u' \left(s\right) e^{-\sigma(t_{n}-s)}} ds \\
& = \frac{1}{1-\gamma} \sum^{n}_{k=1} \frac{u(t_{k})-u(t_{k-1})}{\tau} \int_{t_{k-1}}^{t_{k}}{ e^{-\sigma(t_{n}-s)}} ds +r^{n}_{\tau}\\
& = \frac{1}{1-\gamma} \sum^{n}_{k=1} \frac{u(t_{k})-u(t_{k-1})}{\sigma\tau} e^{-\sigma(n-k)\tau} (1-e^{-\sigma\tau}) +r^{n}_{\tau}.
\end{split}
\end{equation*}
Here $r^{n}_{\tau}= -I_1-I_2$ with
\begin{equation} \label{2.1}
\begin{split}
&I_1 = \frac{1}{1-\gamma} \sum^{n}_{k=1} \int^{t_{k}}_{t_{k-1}}{\frac{u''\left(s\right)(t_{k}+t_{k-1}-2s)}{2} e^{-\sigma (t_{n}-s)}}ds\\
&I_2= \frac{1}{1-\gamma} \sum^{n}_{k=1} \int^{t_{k}}_{t_{k-1}}{\frac{(t_{k}-s)^3 u'''(\zeta_{1k})-(t_{k-1}-s)^3u'''(\zeta_{2k})}{6 \tau} e^{-\sigma (t_{n}-s)}}ds,
\end{split}
\end{equation}
and $\zeta_{1k},\zeta_{2k} \in (t_{k-1},t_{k})$.

Next we shall  estimate $r^{n}_{\tau}= \mathcal{O}\left(\tau^2\right)$.
According to the first equation of (\ref{2.1}), the first mean value theorem for definite integrals and Taylor series expansion, we have
\begin{equation*}
\begin{split}
I_1
=& \frac{1}{1-\gamma} \sum^{n}_{k=1} \int^{t_{k-\frac{1}{2}}}_{t_{k-1}}{\frac{u''\left(s\right)(t_{k}+t_{k-1}-2s)}{2} e^{-\sigma (t_{n}-s)}}ds\\
 &+\frac{1}{1-\gamma}\sum^{n}_{k=1}\int^{t_{k}}_{t_{k-\frac{1}{2}}}{\frac{u''\left(s\right)(t_{k}+t_{k-1}-2s)}{2} e^{-\sigma (t_{n}-s)}}ds \\
=&\frac{1}{1-\gamma}\sum^{n}_{k=1} \frac{u''(\eta_{1k})}{2} \int^{t_{k-\frac{1}{2}}}_{t_{k-1}}{(t_{k}+t_{k-1}-2s) e^{-\sigma (t_{n}-s)}}ds \\
 &+\frac{1}{1-\gamma}\sum^{n}_{k=1}\frac{u''(\eta_{2k})}{2}\int^{t_{k}}_{t_{k-\frac{1}{2}}}{(t_{k}+t_{k-1}-2s)e^{-\sigma (t_{n}-s)}}ds \\
=&I_{11}+I_{12},
\end{split}
\end{equation*}
where
\begin{equation}\label{2.2}
\begin{split}
I_{11}
=& \frac{1}{2(1-\gamma)} \sum^{n}_{k=1} u''(\eta_{1k}) \int^{t_{k}}_{t_{k-1}}{(t_{k}+t_{k-1}-2s) e^{-\sigma (t_{n}-s)}}ds\\
I_{12}
=&\frac{1}{2(1-\gamma)} \sum^{n}_{k=1}(\eta_{2k}-\eta_{1k})u'''(\eta_{3k}) \int^{t_{k}}_{t_{k-\frac{1}{2}}}{(t_{k}+t_{k-1}-2s) e^{-\sigma (t_{n}-s)}}ds
\end{split}
\end{equation}
with $\eta_{1k},\eta_{2k},\eta_{3k} \in (t_{k-1},t_{k})$.
From the definite integrals of  (\ref{2.2}), there exists
\begin{equation*}
\begin{split}
I_{3k}
& = \int^{t_{k}}_{t_{k-1}}{(t_{k}+t_{k-1}-2s) e^{-\sigma (t_{n}-s)}}ds\\
& = e^{-\sigma (n-k) \tau} \left[ -\frac{\tau}{\sigma}\left( 1+ e^{-\sigma  \tau}\right)   + \frac{2}{\sigma^2} \left( 1- e^{-\sigma  \tau}\right) \right]
 = e^{-\sigma (n-k)\tau} \left[   - \frac{\sigma}{6} \tau^3 +\mathcal{O}\left(\tau^4\right) \right];
\end{split}
\end{equation*}
and
\begin{equation*}
\begin{split}
I_{4k}
=&  \int^{t_{k}}_{t_{k-\frac{1}{2}}}{(t_{k}+t_{k-1}-2s) e^{-\sigma (t_{n}-s)}}ds\\
 = &e^{-\sigma(n-k)\tau} \left[ \frac{2}{\sigma^2} \left( 1-e^{-\frac{1}{2}\sigma\tau} \right) - \frac{\tau}{\sigma} \right]
  =e^{-\sigma(n-k)\tau} \left[-\frac{1}{4}\tau^2  +\mathcal{O}\left(\tau^3\right)\right].
\end{split}
\end{equation*}
Therefore, we have
\begin{equation*}
\begin{split}
I_{11} &= \frac{1}{2(1-\gamma)} \sum^{n}_{k=1} u''(\eta_{1k})  I_{3k}=\mathcal{O}\left(\tau^2\right),\\
I_{12}& = \frac{1}{2(1-\gamma)} \sum^{n}_{k=1}(\eta_{2k}-\eta_{1k})u'''(\eta_{3k})  I_{4k}=\mathcal{O}\left(\tau^2\right),
\end{split}
\end{equation*}
where we use $\eta_{2k}-\eta_{1k}=\mathcal{O}(\tau)$. From the above equations, we get
\begin{equation*}
I_1 = I_{11} + I_{12} =\mathcal{O}\left(\tau^2\right).
\end{equation*}
Since
\begin{equation*}
I_2 = I_{21} + I_{22}
\end{equation*}
with
\begin{equation*}
\begin{split}
I_{21}
& =  \frac{1}{6 \tau(1-\gamma)} \sum^{n}_{k=1} u'''(\zeta_{1k})  \int^{t_{k}}_{t_{k-1}}{(t_{k}-s)^3 e^{-\sigma (t_{n}-s)}}  ds,\\
I_{22}
&=- \frac{1}{6 \tau(1-\gamma)} \sum^{n}_{k=1} u'''(\zeta_{2k})  \int^{t_{k}}_{t_{k-1}}{(t_{k-1}-s)^3 e^{-\sigma (t_{n}-s)}}ds.
\end{split}
\end{equation*}
It is easy to check that
\begin{equation*}
\begin{split}
I_{5k}
& = \int^{t_{k}}_{t_{k-1}}{(t_{k}-s)^3 e^{-\sigma (t_{n}-s)}}ds
 = e^{-\sigma ({n-k+1})\tau} \left[ \frac{6}{\sigma^4}(e^{\sigma\tau} -1 ) - \frac{6\tau}{\sigma^3} -\frac{3\tau^2}{\sigma^2} -\frac{\tau^3}{\sigma}  \right]\\
& = e^{-\sigma ({n-k+1})\tau} \left[ \frac{1}{4}\tau^4 + \mathcal{O}\left(\tau^5\right) \right],
  \end{split}
\end{equation*}
and
\begin{equation*}
\begin{split}
 I_{6k}
& = \int^{t_{k}}_{t_{k-1}}{(t_{k-1}-s)^3 e^{-\sigma (t_{n}-s)}}ds
 = e^{-\sigma (n-k) \tau} \left[ \frac{6}{\sigma^4}(1 - e^{ -\sigma\tau} ) - \frac{6\tau}{\sigma^3} +\frac{3\tau^2}{\sigma^2} -\frac{\tau^3}{\sigma}  \right]\\
 & = e^{-\sigma (n-k) \tau} \left[- \frac{1}{4}\tau^4 + \mathcal{O}\left(\tau^5\right)\right].
  \end{split}
\end{equation*}
It yields
\begin{equation*}
  I_2 = I_{21}+I_{22} =  \mathcal{O}\left(\tau^2\right).
\end{equation*}
According to the above equations, we have
\begin{equation*}
  |r^{n}_{\tau}| \leq |I_1|+|I_2| = \mathcal{O}\left(\tau^2\right).
\end{equation*}
The proof is completed.
\end{proof}
\subsection{Derivation of numerical schemes for 1D}
Consider  the one-dimensional time-space Caputo-Riesz fractional   diffusion equation
\begin{equation}\label{2.0016}
{^{CF}_0 D^\gamma _t}u(x,t)=\frac{\partial ^{\alpha}u(x,t)}{\partial |x|^{\alpha}}+f(x,t).
\end{equation}
To discrete the Riesz fractional derivative for $1<\alpha<2$, we notice that the approximation operator of (\ref{1.3}) is given in  \cite{Chen:14}
\begin{equation*}
\begin{split}
& \delta_{\alpha,+x}u^{n}_{i}:=\frac{1}{\Gamma(4-\alpha)(\Delta x)^{\alpha}} \sum^{i+1}_{m=0}{g^{\alpha}_{m}u^{n}_{i-m+1}} = \frac{1}{\Gamma(4-\alpha)(\Delta x)^{\alpha}} \sum^{i+1}_{m=0}{g^{\alpha}_{i-m+1}u^{n}_{m}};\\
&\delta_{\alpha,-x}u^{n}_{i}:=\frac{1}{\Gamma(4-\alpha)(\Delta x)^{\alpha}} \sum^{N_x-i+1}_{m=0}{g^{\alpha}_{m}u^{n}_{i+m-1}} = \frac{1}{\Gamma(4-\alpha)(\Delta x)^{\alpha}} \sum^{N_x}_{m=i-1}{g^{\alpha}_{m-i+1}u^{n}_{m}},
\end{split}
\end{equation*}
and there exists
\begin{equation*}
  _{0} D_x ^{\alpha} u(x,t) \big|_{x=x_i} = \delta_{\alpha,+x}u^{n}_{i} + \mathcal{O}\left(\left(\Delta x\right)^2\right)~~{\rm and}~~ _{x} D_{x_R} ^{\alpha} u(x,t) \big|_{x=x_{i}}
  = \delta_{\alpha,-x}u^{n}_{i} + \mathcal{O}\left(\left(\Delta x\right)^2\right)
\end{equation*}
with
\begin{equation} \label{2.3}
  g^{\alpha}_{m} = \left\{
\begin{array}{lcl}
1,                                                                                             &     &{m=0},\\
-4+2^{3-\alpha},                                                                               &     &{m=1},\\
6-2^{5-\alpha}+3^{3-\alpha},                                                                   &     &{m=2},\\
(m+1)^{3-\alpha}-4m^{3-\alpha}+6(m-1)^{3-\alpha}-4(m-2)^{3-\alpha}+(m-3)^{3-\alpha},           &     &{m\geq3}.
\end{array} \right.
\end{equation}
Hence, the discrete scheme of the Riesz fractional derivative is
\begin{equation} \label{2.4}
  \begin{split}
    \frac{\partial^\alpha u (x,t)}{\partial |x|^\alpha} \bigg|_{x=x_{i}}
    & = \kappa_{\alpha} \left(_0 D ^{\alpha}_x +_x D ^{\alpha}_{x_R}\right)u(x,t) \big|_{x=x_{i}}
     = \kappa_{\alpha} \left( \delta_{\alpha,+x} + \delta_{\alpha,-x} \right)u^{n}_{i} + \mathcal{O}\left(\left(\Delta x\right)^2\right) \\
    & = \frac{ \kappa_{\alpha} }{ \Gamma(4-\alpha)(\Delta x)^{\alpha} } \sum^{N_x}_{m=0}{ \widetilde{g}^{\alpha}_{i,m} u^{n}_{m}}+\mathcal{O}\left(\left(\Delta x\right)^2\right),
  \end{split}
\end{equation}
where
\begin{equation} \label{2.5}
 g^{\alpha}_{i,m} = \left\{
  \begin{array}{lcl}
    g^{\alpha}_{i-m+1},              &     &{m<i-1},\\
    g^{\alpha}_{0}+g^{\alpha}_{2},   &     &{m=i-1},\\
    2g^{\alpha}_{1},                 &     &{m=i},  \\
    g^{\alpha}_{0}+g^{\alpha}_{2},   &     &{m=i+1},\\
    g^{\alpha}_{m-i+1},              &     &{m>i+1},
  \end{array} \right.
\end{equation}
with $i=1,2,\cdots,N_x-1$, $u^{n}_{0} $ and $u^{n}_{N_x}$ are the  boundary conditions.

Taking $u=[u(x_1),u(x_2),\cdots,u(x_{N_x-1})]^T$, then
\begin{equation*}
  \left[\sum^{N_x}_{m=0}{g^{\alpha}_{1,m} u(x_{m})} , \sum^{N_x}_{m=0}{g^{\alpha}_{2,m} u(x_{m})} , \cdots, \sum^{N_x}_{m=0}{g^{\alpha}_{N_x-1,m} u(x_{m})}\right] = A_{\alpha}u,
\end{equation*}
where
\begin{equation} \label{2.6}
A_{\alpha}= B_{\alpha}+ B^T_{\alpha}
~~~~{with}~~~~
    B_{\alpha}=\left [ \begin{matrix}
    g^{\alpha}_{1}  &  g^{\alpha}_{2}  &  g^{\alpha}_{3}  &    \cdots   &  g^{\alpha}_{N_x-2} &  g^{\alpha}_{N_x-1}    \\
    g^{\alpha}_{0}  &  g^{\alpha}_{1}  &  g^{\alpha}_{2}  &    \cdots   &  g^{\alpha}_{N_x-3} &  g^{\alpha}_{N_x-2}     \\
    0               &  g^{\alpha}_{0}  &  g^{\alpha}_{1}  &    \cdots   &  g^{\alpha}_{N_x-4} &  g^{\alpha}_{N_x-3}      \\
    \vdots          &      \vdots      &       \vdots     &    \ddots   &      \vdots       &    \vdots               \\
    0               &      0           &       0          &    \cdots   &  g^{\alpha}_{1}   &  g^{\alpha}_{2}          \\
    0               &      0           &       0          &    \cdots   &  g^{\alpha}_{0}   &  g^{\alpha}_{1}
    \end{matrix}
    \right ]_{\left(N_x-1\right) \times \left(N_x-1\right)}.
\end{equation}
According to (\ref{2.4}) and Lemma \ref{lemma2.1}, we can rewrite (\ref{2.0016}) as
\begin{equation}  \label{2.7}
\frac{1}{1-\gamma} \sum^{n}_{k=1} \frac{u^{k}_{i}-u^{k-1}_{i}}{\sigma\tau} e^{-\sigma(n-k)\tau} (1-e^{-\sigma\tau})
 = \frac{  \kappa_{\alpha} }{ \Gamma(4-\alpha)(\Delta x)^{\alpha} } \sum^{N_x}_{m=0}{g^{\alpha}_{i,m} u^{n}_{m}} + f^{n}_{i} +r^{n}_{i}
\end{equation}
with  the local truncation error
\begin{equation}  \label{2.8}
  |r^{n}_{i}|\leq C_u\left(\tau^2+\left(\Delta x\right)^2\right),
\end{equation}
where the positive constant $C_u$ independent of $\tau$ and $h$.

Therefore  the resulting discretization of (\ref{2.0016}) is
\begin{equation}  \label{2.9}
\frac{1}{1-\gamma} \sum^{n}_{k=1} \frac{u^{k}_{i}-u^{k-1}_{i}}{\sigma\tau} e^{-\sigma(n-k)\tau} (1-e^{-\sigma\tau})
= \frac{ \kappa_\alpha }{ \Gamma(4-\alpha)(\Delta x)^{\alpha} } \sum^{N_x}_{m=0}{g^{\alpha}_{i,m} u^{n}_{m}} + f^{n}_{i},
\end{equation}
i.e.,
\begin{equation}\label{2.10}
\begin{split}
 &  \frac{1}{(1-\gamma)\sigma \tau} \left[u^{n}_{i}\left(1-e^{-\sigma \tau}\right) - \sum^{n-1}_{k=1} {u^{k}_{i}} e^{-\sigma(n-1-k)\tau}\left(1-e^{-\sigma \tau}\right)^2  \right] \\
 &= \frac{ \kappa_\alpha }{ \Gamma(4-\alpha)(\Delta x)^{\alpha} } \sum^{N_x}_{m=0}{g^{\alpha}_{i,m} u^{n}_{m}} + f^{n}_{i} + \frac{1}{(1-\gamma)\sigma \tau} u^{0}_{i}(1-e^{-\sigma \tau})e^{-\sigma (n-1) \tau},
 \end{split}
\end{equation}
which is equivalent to
\begin{equation}\label{2.11}
 u^{n}_{i}    -  \kappa_{\Delta x,\tau}\sum^{N_x}_{m=0}{g^{\alpha}_{i,m} u^{n}_{m}}
  =  \sum^{n-1}_{k=1} {u^{k}_{i}} e^{-\sigma(n-1-k)\tau}\left(1-e^{-\sigma \tau}\right) + u^{0}_{i}e^{-\sigma (n-1) \tau}+\frac{(1-\gamma)\sigma \tau}{1-e^{-\sigma \tau}}f^{n}_{i}
\end{equation}
with $\kappa_{\Delta x,\tau}=\frac{(1-\gamma)\sigma \tau}{\left(1-e^{-\sigma \tau}\right)}  \frac{ \kappa_\alpha }{ \Gamma(4-\alpha)(\Delta x)^{\alpha} }$.
\subsection{Derivation of numerical schemes for 2D}
In the same way, we can  rewrite (\ref{1.1}) as
\begin{equation} \label{2.13}
\begin{split}
&\frac{1}{1-\gamma} \sum^{n}_{k=1} \frac{u^{k}_{i,j}-u^{k-1}_{i,j}}{\sigma\tau} e^{-\sigma(n-k)\tau} (1-e^{-\sigma\tau}) \\
& = \frac{  \kappa_{\alpha} }{ \Gamma(4-\alpha)(\Delta x)^{\alpha} } \sum^{N_x}_{m=0}{g^{\alpha}_{i,m} u^{n}_{m,j}}
  + \frac{  \kappa_{\beta} }{ \Gamma(4-\beta)(\Delta y)^{\beta} } \sum^{N_y}_{m=0}{g^{\beta}_{j,m} u^{n}_{i,m}}+ f^{n}_{i,j} +r^{n}_{i,j}
\end{split}
\end{equation}
with  the local truncation error
\begin{equation}  \label{2.14}
  |r^{n}_{i,j}|\leq C_u(\tau^2+(\Delta x)^2 + (\Delta y)^2).
\end{equation}
Therefore  the resulting discretization of (\ref{1.1}) is
\begin{equation} \label{2.15}
\begin{split}
&\frac{1}{1-\gamma} \sum^{n}_{k=1} \frac{u^{k}_{i,j}-u^{k-1}_{i,j}}{\sigma\tau} e^{-\sigma(n-k)\tau} (1-e^{-\sigma\tau}) \\
& = \frac{  \kappa_{\alpha} }{ \Gamma(4-\alpha)(\Delta x)^{\alpha} } \sum^{N_x}_{m=0}{g^{\alpha}_{i,m} u^{n}_{m,j}}
  + \frac{  \kappa_{\beta} }{ \Gamma(4-\beta)(\Delta y)^{\beta} } \sum^{N_y}_{m=0}{g^{\beta}_{j,m} u^{n}_{i,m}}+ f^{n}_{i,j},
\end{split}
\end{equation}
i.e.,
\begin{equation}\label{2.16}
\begin{split}
 & u^{n}_{i,j}-\kappa^{\alpha}_{\Delta x,\tau}\sum^{N_x}_{m=0}{g^{\alpha}_{i,m}u^{n}_{m,j}}-\kappa^{\alpha}_{\Delta y,\tau}\sum^{N_y}_{m=0}{g^{\alpha}_{j,m} u^{n}_{i,m}}\\
 &=  \sum^{n-1}_{k=1} {u^{k}_{i,j}} e^{-\sigma(n-1-k)\tau}\left(1-e^{-\sigma \tau}\right) + u^{0}_{i,j}e^{-\sigma (n-1) \tau}+\frac{(1-\gamma)\sigma \tau}{1-e^{-\sigma \tau}}f^{n}_{i,j}
\end{split}
\end{equation}
with $\kappa^{\alpha}_{\Delta x,\tau}=\frac{(1-\gamma)\sigma \tau}{\left(1-e^{-\sigma \tau}\right)}  \frac{ \kappa_\alpha }{ \Gamma(4-\alpha)(\Delta x)^{\alpha} }$ and $\kappa^{\beta}_{\Delta y,\tau}=\frac{(1-\gamma)\sigma \tau}{\left(1-e^{-\sigma \tau}\right)}  \frac{ \kappa_\beta }{ \Gamma(4-\beta)(\Delta y)^{\beta} }$.

\section{Stability and convergence}
In this section, we theoretically prove that the above numerical schemes are unconditionally stable with the nonzero initial conditions.
First, we denote $u^{n}=\{u^{n}_{i}|0\leq i \leq N_x,n \geq 0\}$ and $v^{n}=\{v^{n}_{i}|0\leq i \leq N_x,n \geq 0\}$, which are grid functions. And we introduce the discrete inner products as following
$$(u^{n},v^{n}) = \Delta x \sum^{N_x-1}_{i=0}{u^{n}_{i}v^{n}_{i}},~~~||u^{n}||=(u^{n},u^{n})^{1/2}.$$
\subsection{A few technical lemmas}
\begin{lemma}\cite[p.\,27]{Quarteroni:07} \label{lemma3.1}
 A matrix $A \in R $ is positive definite in $R$ if $(Ax,x)>0, {\forall}x \in R^{n}, x \neq 0$. A real matrix $A$ of order $n$ is positive definite iff its symmetric part $H=\frac{A+A^{T}}{2}$ is positive definite.
\end{lemma}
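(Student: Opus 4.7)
The plan is to reduce the claim to the elementary scalar identity that for any real square matrix $A$ and any real vector $x$, the bilinear form $(Ax,x)$ coincides with the bilinear form associated with the symmetrized matrix $H=(A+A^T)/2$. Once that identity is in hand, the equivalence of positive definiteness is immediate.

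First I would fix $x\in\mathbb{R}^n$ and observe that $(Ax,x)=x^T A x$ is a $1\times 1$ scalar, hence equals its own transpose. Transposing gives $x^T A x=(x^T A x)^T=x^T A^T x$. Averaging the two expressions yields
\begin{equation*}
(Ax,x)=\tfrac{1}{2}\bigl(x^T A x + x^T A^T x\bigr)=x^T\!\left(\tfrac{A+A^T}{2}\right)\!x=(Hx,x).
\end{equation*}
This single identity is really the whole content of the lemma, since it holds for every $x\in\mathbb{R}^n$ including $x=0$.

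From this identity the biconditional drops out directly: $(Ax,x)>0$ for every nonzero $x\in\mathbb{R}^n$ if and only if $(Hx,x)>0$ for every nonzero $x\in\mathbb{R}^n$. The ``only if'' and ``if'' directions are symmetric because $H$ is symmetric and the relation $(Ax,x)=(Hx,x)$ has no side on which $A$ or $H$ is preferred. Thus $A$ is positive definite in the sense of the first sentence of the lemma precisely when $H$ is positive definite, and since $H=H^T$ this coincides with the standard notion of positive definiteness for symmetric matrices (equivalently, all eigenvalues of $H$ being positive, by the spectral theorem, if one wishes to invoke it).

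The only subtle point, and the one I would make sure to address explicitly, is that the identity $(Ax,x)=(x,A^Tx)$ is being applied to real vectors only; over $\mathbb{C}$ one has to replace $A^T$ by the conjugate transpose $A^*$ and the analogous statement concerns the Hermitian part rather than the symmetric part. Since the paper works exclusively with real matrices arising from the discretizations $B_\alpha+B_\alpha^T$, this restriction is not an obstacle. No further machinery is required, so there is no genuine ``hard part'' here; the lemma is purely a bookkeeping device to be used later in the stability analysis, where the symmetric part of the spatial discretization matrix $A_\alpha$ in (\ref{2.6}) will be shown to have appropriate sign properties.
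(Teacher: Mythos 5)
Your argument is correct and complete: the scalar identity $x^TAx=x^TA^Tx$ immediately gives $(Ax,x)=(Hx,x)$ for all real $x$, from which the equivalence follows. The paper offers no proof of this statement at all --- it is quoted verbatim from Quarteroni, Sacco, and Saleri \cite[p.\,27]{Quarteroni:07} and used as a black box in Lemma \ref{lemma3.6} --- so there is nothing to compare against; your symmetrization argument is the standard textbook proof and fills the gap correctly, and your remark that the real-versus-complex distinction matters (Hermitian part versus symmetric part) is apt but not needed here.
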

\begin{lemma} \cite[p.\,29]{Quarteroni:07}  \label{lemma3.2}
  A matrix $A \in R^{n \times n}$ is called  diagonally dominant by rows if $|a_{ii}| \geq \sum^{n}_{j=1,j \neq i}{|a_{ij}|} $ with $i = 1,2,\cdots n$. A strictly diagonally dominant matrix that is symmetric with positive diagonal entries is also positive definite.
\end{lemma}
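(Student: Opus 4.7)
The plan is to localize the spectrum of $A$ by Gershgorin's circle theorem and then use symmetry to reduce the problem to positivity of real eigenvalues. Gershgorin's theorem says that every eigenvalue $\lambda$ of $A$ lies in at least one disk $|\lambda - a_{ii}| \leq R_i$, where $R_i := \sum_{j \neq i}|a_{ij}|$.

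First I would fix an arbitrary eigenvalue $\lambda$ and pick the index $i$ for which the disk bound holds. Because $A$ is real symmetric, $\lambda$ is real, so the bound collapses to $a_{ii} - R_i \leq \lambda \leq a_{ii} + R_i$. The hypotheses give $a_{ii} > 0$ and $R_i < a_{ii}$ from strict diagonal dominance, hence $a_{ii} - R_i > 0$, which forces $\lambda > 0$. Since every eigenvalue of the real symmetric matrix $A$ is strictly positive, an orthonormal diagonalization $A = Q \Lambda Q^T$ yields $x^T A x = \sum_k \lambda_k (Q^T x)_k^2 > 0$ for every $x \neq 0$, which is exactly positive definiteness in the sense of Lemma \ref{lemma3.1}.

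A direct energy-method alternative is also viable: expand
\begin{equation*}
x^T A x = \sum_i a_{ii} x_i^2 + \sum_{i \neq j} a_{ij} x_i x_j,
\end{equation*}
bound each off-diagonal cross term by $|a_{ij} x_i x_j| \leq \tfrac{|a_{ij}|}{2}(x_i^2 + x_j^2)$, and collect to obtain $x^T A x \geq \sum_i (a_{ii} - R_i) x_i^2$. This avoids invoking Gershgorin but takes a few extra lines to confirm strict positivity for $x \neq 0$. The main pitfall in either route is that weak diagonal dominance alone yields only positive semi-definiteness; strictness must be invoked at exactly the right step, namely in the inequality $R_i < a_{ii}$ for the Gershgorin disks, or in the coefficient $a_{ii} - R_i > 0$ for the energy estimate. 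So the principal care in writing the proof is to cite the strict form of the diagonal dominance hypothesis in precisely that place and not earlier.
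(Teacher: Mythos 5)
Your proof is correct. Note that the paper does not prove this lemma at all --- it is quoted verbatim from Quarteroni--Sacco--Saleri (p.~29) and used as a black box in Lemma \ref{lemma3.6} --- so there is no in-paper argument to compare against. Your Gershgorin route is the standard textbook proof: symmetry makes the eigenvalues real, strict dominance with positive diagonal pushes every Gershgorin interval $[a_{ii}-R_i,\,a_{ii}+R_i]$ into the positive half-line, and the spectral decomposition converts positive eigenvalues into positive definiteness. The energy-method alternative is also sound (the cross-term bound $|a_{ij}x_ix_j|\leq \tfrac{|a_{ij}|}{2}(x_i^2+x_j^2)$ together with symmetry of $A$ collects to $x^TAx\geq\sum_i(a_{ii}-R_i)x_i^2>0$), and you correctly identify that strictness of the dominance is the one place where the hypothesis must be invoked to avoid landing only at semi-definiteness.
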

\begin{lemma} [\cite{Chen:14}] \label{lemma3.3}
The coefficients $\widetilde g^{\alpha}_{i,m}$, $\alpha \in (1,2)$ defined in (\ref{2.5}) satisfy
\begin{equation}
\begin{split}
      & (1)~ g^{\alpha}_{i,i} < 0 , ~~~ g^{\alpha}_{i,m} > 0 ~~(m \neq i); \\
      & (2)~\sum^{N_x}_{m=0}{ g^{\alpha}_{i,m}} < 0 ~~~and~~~ - g^{\alpha}_{i,i} > \sum^{N_x}_{ m=0 ,m \neq i  }{ g^{\alpha}_{i,m}};\\
      & (3)~g_0^\alpha>0,~ g_1^\alpha<0,~ g_0^\alpha+g_2^\alpha>0, ~g_k^\alpha>0~~\forall k\geq 3.
\end{split}
\end{equation}
\end{lemma}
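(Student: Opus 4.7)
The plan is to prove (3) first, since it carries all the sign information; then (1) is a line-by-line inspection of (2.5), and (2) follows from the identity $\sum_{m=0}^{\infty} g^{\alpha}_m = 0$ that drops out of the $\Delta^4$-structure of the coefficients.

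For the elementary parts of (3), $g^{\alpha}_0 = 1 > 0$ is immediate, and since $\alpha \in (1,2)$ forces $2^{3-\alpha} \in (2,4)$ we get $g^{\alpha}_1 < 0$. For $\phi(\alpha) := g^{\alpha}_0 + g^{\alpha}_2 = 7 - 2^{5-\alpha} + 3^{3-\alpha}$ I would check $\phi(1) = 0$ and show that $\phi'(\alpha) = 2^{5-\alpha}\ln 2 - 3^{3-\alpha}\ln 3$ is positive on $[1,2]$ (its endpoint values are both positive, while $\phi''$ changes sign at most once, so $\phi'$ takes its minimum at an endpoint); hence $\phi > 0$ on $(1,2]$. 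The core of (3) is $g^{\alpha}_k > 0$ for $k \geq 3$. Setting $f(x) = x^{3-\alpha}$ and noting that the recurrence for $m \geq 3$ is precisely $g^{\alpha}_m = \Delta^4 f(m-3)$ with unit forward difference $\Delta$, the mean-value theorem for divided differences gives $g^{\alpha}_k = f^{(4)}(\xi_k)$ for some $\xi_k \in (k-3,\, k+1)$ whenever $k \geq 4$. A direct computation yields $f^{(4)}(x) = (3-\alpha)(2-\alpha)(1-\alpha)(-\alpha)\, x^{-1-\alpha}$, and for $\alpha \in (1,2)$ the two negative factors $(1-\alpha)$ and $(-\alpha)$ pair up to make the coefficient positive, so $f^{(4)}>0$ on $(0,\infty)$ and $g^{\alpha}_k > 0$ for $k \geq 4$. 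The boundary case $k = 3$, where $(k-3)^{3-\alpha}=0$, reduces to $g^{\alpha}_3 = 4^{3-\alpha} - 4\cdot 3^{3-\alpha} + 6\cdot 2^{3-\alpha} - 4$; this vanishes at $\alpha\in\{1,2\}$, and I would establish strict positivity on $(1,2)$ by a direct one-variable analysis (sign of the second derivative in $\alpha$ on the interior).

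Claim (1) is then a case-by-case reading of (2.5): the diagonal is $2g^{\alpha}_1 < 0$; the immediate off-diagonals are $g^{\alpha}_0 + g^{\alpha}_2 > 0$; and the remaining entries equal $g^{\alpha}_{|m-i|+1}$ with $|m-i|+1 \geq 3$, hence positive by (3). For (2) I would first prove $\sum_{m=0}^{\infty} g^{\alpha}_m = 0$ by telescoping: $\sum_{m=3}^{M} g^{\alpha}_m = \Delta^3 f(M-2) - \Delta^3 f(0)$, where $\Delta^3 f(M-2) = f'''(\eta_M) \to 0$ as $M \to \infty$ (since $f'''(x) = (3-\alpha)(2-\alpha)(1-\alpha)\, x^{-\alpha}$), and a short algebraic check against $g^{\alpha}_0 + g^{\alpha}_1 + g^{\alpha}_2$ cancels all $\alpha$-dependent terms. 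Grouping (2.5) by cases then collapses to
\begin{equation*}
\sum_{m=0}^{N_x} g^{\alpha}_{i,m} = 2\bigl(g^{\alpha}_0 + g^{\alpha}_1 + g^{\alpha}_2\bigr) + \sum_{j=3}^{i+1} g^{\alpha}_j + \sum_{j=3}^{N_x-i+1} g^{\alpha}_j = -\sum_{j=i+2}^{\infty} g^{\alpha}_j - \sum_{j=N_x-i+2}^{\infty} g^{\alpha}_j < 0,
\end{equation*}
where I used $g^{\alpha}_0 + g^{\alpha}_1 + g^{\alpha}_2 = -\sum_{j\geq 3} g^{\alpha}_j$ and the positivity in (3). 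The strict diagonal dominance $-g^{\alpha}_{i,i} > \sum_{m\neq i} g^{\alpha}_{i,m}$ is immediate by isolating the diagonal term on the left-hand side.

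The main obstacle is the boundary case $k=3$ in (3): $(k-3)^{3-\alpha}$ lands exactly at the singularity of $f^{(4)}$, so the clean divided-difference-to-derivative identification used for $k \geq 4$ fails and a separate one-variable monotonicity argument on $(1,2)$ is needed to close the gap. Everything downstream---items (1) and (2)---is then mechanical, driven entirely by the vanishing of the total mass of the sequence $\{g^{\alpha}_m\}_{m\geq 0}$.
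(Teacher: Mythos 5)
The paper does not actually prove this lemma: it is imported verbatim from the cited reference [Chen et al., BIT 54 (2014)], so there is no in-paper argument to compare against. Judged on its own, your reconstruction follows the standard route for these weights and is essentially correct: identifying $g^{\alpha}_m$ for $m\geq 3$ with $\Delta^4 f(m-3)$ for $f(x)=x^{3-\alpha}$, getting positivity from $f^{(4)}>0$ via the mean value theorem for divided differences, reading off (1) from (\ref{2.5}), and deriving (2) from the telescoping identity $\sum_{m=0}^{M}g^{\alpha}_m=\Delta^3 f(M-2)=f'''(\eta_M)<0$ together with $\sum_{m=0}^{\infty}g^{\alpha}_m=0$. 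Your collapse of the row sum to $-\sum_{j\geq i+2}g^{\alpha}_j-\sum_{j\geq N_x-i+2}g^{\alpha}_j$ is exactly right, and it is in fact sharper than needed, since the partial sums $\sum_{m=0}^{i+1}g^{\alpha}_m<0$ are precisely what the paper's Lemma \ref{lemma3.4} establishes (by a second-difference argument on $\int_0^{x}(x-\xi)^{1-\alpha}d\xi$); you could simply invoke that lemma for part (2) instead of re-deriving the zero-total-mass identity.

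Two loose ends remain, both of which you flag. First, the step ``$\phi''$ changes sign at most once, so $\phi'$ takes its minimum at an endpoint'' is not a valid implication as stated (if $\phi''$ went from negative to positive the minimum of $\phi'$ would be interior); it happens to work here because $\phi''$ goes from $+$ to $-$, and a cleaner route is to observe that $\phi'(\alpha)>0$ is equivalent to $(5-\alpha)\ln 2+\ln\ln 2>(3-\alpha)\ln 3+\ln\ln 3$, whose difference is linear and increasing in $\alpha$, so it suffices to check $\alpha=1$. Second, the genuine gap is $g^{\alpha}_3>0$, where the divided-difference argument meets the singularity of $f^{(4)}$ at $0$. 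Rather than a bare ``direct one-variable analysis'' (which would require proving concavity of $4^{3-\alpha}-4\cdot 3^{3-\alpha}+6\cdot 2^{3-\alpha}-4$ in $\alpha$, itself a nontrivial three-exponential inequality), the clean fix is the Peano-kernel form $\Delta^4 f(0)=\int_0^4 f^{(4)}(t)\,B(t)\,dt$ with $B\geq 0$ the cubic B-spline: near $t=0$ one has $B(t)\sim t^3/6$, so the integrand behaves like $c\,t^{2-\alpha}$ with $c>0$ and is integrable, giving $g^{\alpha}_3>0$ by the same positivity of $f^{(4)}$ that handles $k\geq 4$, with no separate case analysis.
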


\begin{lemma}  \label{lemma3.4}
Let $1<\alpha <2$ and $ g^{\alpha}_{m}$ given by (\ref{2.3}). Then
\begin{equation} \label{3.1}
    \sum^{i+1}_{m=0}{ g^{\alpha}_{m}} \leq \frac{1}{i^{\alpha} \Gamma(1-\alpha)} <0. 
\end{equation}
\end{lemma}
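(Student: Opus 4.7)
The plan is to convert the discrete sum into a classical third-order forward difference, then translate the question into an integral inequality on $[0,1]^3$, and finish with a monotonicity/Taylor analysis.

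First, set $f(y):=y^{3-\alpha}$ and let $\Delta g(y):=g(y+1)-g(y)$. A direct inspection of the definition (\ref{2.3}) shows that for all $m\geq 3$ the coefficient $g^{\alpha}_{m}$ is precisely the fourth forward difference $\Delta^4 f(m-3)$. Hence the tail $\sum_{m=3}^{i+1} g^{\alpha}_{m}=\sum_{m=3}^{i+1} \Delta^4 f(m-3)$ telescopes to $\Delta^3 f(i-1)-\Delta^3 f(0)$, and a separate elementary calculation yields $g^{\alpha}_{0}+g^{\alpha}_{1}+g^{\alpha}_{2}=3-3\cdot 2^{3-\alpha}+3^{3-\alpha}=\Delta^{3}f(0)$, so the two pieces combine into the closed form
$$
\sum_{m=0}^{i+1} g^{\alpha}_{m} \;=\; \Delta^3 f(i-1) \;=\; (i+2)^{3-\alpha}-3(i+1)^{3-\alpha}+3\,i^{3-\alpha}-(i-1)^{3-\alpha}.
$$
The strict negativity $\frac{1}{i^{\alpha}\Gamma(1-\alpha)}<0$ then follows immediately from $1<\alpha<2\Rightarrow\Gamma(1-\alpha)<0$.

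Next, I would use the standard integral representation $\Delta^3 f(i-1)=\int_0^1\!\int_0^1\!\int_0^1 f'''(i-1+s+t+u)\,ds\,dt\,du$ together with $f'''(y)=\frac{\Gamma(4-\alpha)}{\Gamma(1-\alpha)}\,y^{-\alpha}$. Because $\Gamma(4-\alpha)/\Gamma(1-\alpha)<0$, the target inequality $\sum g^{\alpha}_{m}\leq \frac{1}{i^{\alpha}\Gamma(1-\alpha)}$ is equivalent, after dividing through by this negative scalar (which reverses the inequality), to the purely positive statement
$$
\Gamma(4-\alpha)\int_{0}^{1}\!\int_{0}^{1}\!\int_{0}^{1}\!(i-1+s+t+u)^{-\alpha}\,ds\,dt\,du \;\geq\; i^{-\alpha}. \qquad (\ast)
$$

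Finally, proving $(\ast)$ is the main difficulty. A naive Jensen's inequality applied to the convex integrand gives only $(i+1/2)^{-\alpha}$ on the left, which is not sharp enough (e.g.\ at $i=1$, $\alpha=3/2$ the resulting requirement $\Gamma(5/2)\geq(3/2)^{3/2}$ fails, since $1.33<1.84$). The refinement I have in mind is to Taylor-expand the integrand around $y=i$ to high enough order and exploit the symmetry of the Irwin--Hall density of $s+t+u$ around $3/2$, which annihilates the odd moments on integration; the integral remainder then has a definite sign because $(y^{-\alpha})^{(4)}=\alpha(\alpha+1)(\alpha+2)(\alpha+3)\,y^{-\alpha-4}>0$ on $(0,\infty)$. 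Setting $h=1/i\in(0,1]$, $(\ast)$ reduces to a one-variable inequality that holds with slack as $h\to 0^{+}$ (where the left side tends to $\Gamma(4-\alpha)>1$) and becomes tight at the endpoint $h=1$. The hardest part is this boundary case $i=1$, which I would handle separately by showing that $\phi(\alpha):=\frac{1}{\Gamma(1-\alpha)}-[3-3\cdot 2^{3-\alpha}+3^{3-\alpha}]$ is nonnegative on $(1,2)$, exploiting the identities $\phi(1^{+})=\phi(2^{-})=0$ together with a sign analysis of $\phi'$.
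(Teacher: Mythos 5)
Your reduction is correct as far as it goes, and it is genuinely different from (and algebraically more reliable than) the paper's. Your telescoping identity $\sum_{m=0}^{i+1}g^{\alpha}_{m}=(i+2)^{3-\alpha}-3(i+1)^{3-\alpha}+3i^{3-\alpha}-(i-1)^{3-\alpha}$ checks out numerically, whereas the paper instead asserts, via the cited identity with $u\equiv 1$, that the sum equals $(3-\alpha)\,i^{2-\alpha}\left[(1-1/i)^{2-\alpha}-2+(1+1/i)^{2-\alpha}\right]$; at $i=1$, $\alpha=3/2$ your formula gives $\approx-0.289$ (the true value of $g^{\alpha}_{0}+g^{\alpha}_{1}+g^{\alpha}_{2}$) while the paper's expression gives $\approx-0.879$, so the two are not the same quantity and your closed form is the correct one. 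The passage to the triple-integral representation of $\Delta^{3}f(i-1)$ and the equivalence with $(\ast)$ after dividing by $\Gamma(1-\alpha)<0$ are also fine.

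The gap is the final step: $(\ast)$ is a plan rather than a proof, and in fact $(\ast)$ --- and with it the inequality of the lemma as printed --- fails for $\alpha$ close to $2$. Take $i=1$, $\alpha=1.9$: then $\sum_{m=0}^{2}g^{\alpha}_{m}=3-3\cdot 2^{1.1}+3^{1.1}\approx-0.0823$, while $\frac{1}{\Gamma(-0.9)}\approx-0.0946$, so the claimed inequality $-0.0823\le-0.0946$ does not hold; equivalently your $\phi(1.9)\approx-0.012<0$, contradicting the nonnegativity of $\phi$ on $(1,2)$ that your endpoint argument is meant to establish ($\phi$ vanishes at both ends of the interval but changes sign near $\alpha\approx 1.52$). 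The failure is not confined to $i=1$: at $i=2$, $\alpha=1.9$ one gets $\approx-0.0199$ versus $\approx-0.0254$, and for $\alpha=1.9$ the inequality only begins to hold for $i$ of order $20$. So no refinement of the Taylor/Jensen step can close the argument; note also that the symmetric-moment expansion converges poorly at $i=1$ because the integrand $(s+t+u)^{-\alpha}$ is singular at the corner of the cube. What your closed form does deliver, by bounding $y^{-\alpha}\ge(i+2)^{-\alpha}$ on the range of integration, is the valid weaker estimate $\sum_{m=0}^{i+1}g^{\alpha}_{m}\le\frac{\Gamma(4-\alpha)}{\Gamma(1-\alpha)}(i+2)^{-\alpha}<0$, which is still of the form $-c_{\alpha}\,i^{-\alpha}$ and would serve the purpose of Lemma \ref{lemma3.5} with an adjusted constant; but the bound with the constant $1/\Gamma(1-\alpha)$ as stated cannot be proved, because it is not true.
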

\begin{proof}
Using (2.13) and (2.4) with $u(x,t)=1$ of {\cite{Chen:014}}, we obtain
\begin{equation*}
\begin{split}
&\sum^{i+1}_{m=0} {g_{m}}\\
&= \frac{\Gamma(4- \alpha)(\Delta x)^\alpha}{(\Delta x)^{2}\Gamma(2- \alpha)}
 \left[  \int_0^{x_{i-1}} \!\! (x_{i-1}-\xi)^{1-\alpha} d\xi \!\!-\!\! 2 \int_0^{x_{i}} \!\! (x_{i}-\xi)^{1-\alpha} d\xi \!\!+\!\! \int_0^{x_{i+1}} \!\! (x_{i+1}\!\!-\!\!\xi)^{1-\alpha} d\xi \right]\\
    & = (3- \alpha) i^{2- \alpha} \left[ \left(1- \frac{1}{i}\right)^{2- \alpha} -2 + \left(1+ \frac{1}{i}\right)^{2- \alpha} \right] \\
    & \leq  \frac{(3-\alpha)(2-\alpha)(1-\alpha)}{i^{\alpha} }\leq \frac{1}{i^{\alpha} \Gamma(1-\alpha)} <0.
  \end{split}
  \end{equation*}
The proof is completed.
\end{proof}

\begin{lemma} \label{lemma3.5}
Let $1<\alpha<2$ and $A_\alpha $ be given in (\ref{2.6}). Then
\begin{equation*}
-  \frac{1}{(\Delta x)^\alpha}(A_\alpha v,v)\geq -\frac{2}{\Gamma(1-\alpha)(x_R)^\alpha}||v||^2>0 ~~{\rm with}~v\in {\mathbb{R}}^{M-1},~\Omega=(0,x_R).
\end{equation*}
\end{lemma}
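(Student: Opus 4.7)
The bound is essentially a quantified strict-diagonal-dominance estimate, so the plan is to reduce $-(A_\alpha v,v)$ to a weighted sum of squares coming from the row sums of $A_\alpha$, and then control each row sum via Lemma \ref{lemma3.4}. Since $A_\alpha = B_\alpha + B_\alpha^T$ is symmetric, the standard rearrangement
\[
  v^T A_\alpha v \;=\; \sum_{i} s_i\, v_i^2 \;-\; \sum_{i<m} (A_\alpha)_{i,m}\,(v_i-v_m)^2, \qquad s_i := \sum_{m} (A_\alpha)_{i,m},
\]
is available. By Lemma \ref{lemma3.3}(1), every off-diagonal entry $(A_\alpha)_{i,m}=g^\alpha_{i,m}$ is positive, so both $\sum_{i<m}(A_\alpha)_{i,m}(v_i-v_m)^2$ and $-\sum_i s_i v_i^2$ (provided $s_i<0$) contribute with the right sign. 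This yields
\[
  -v^T A_\alpha v \;\ge\; \min_i(-s_i)\,\sum_i v_i^2,
\]
and together with the definition $\|v\|^2=\Delta x\sum_i v_i^2$ gives $-(A_\alpha v,v)\ge \min_i(-s_i)\,\|v\|^2$.

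The substantive step is to bound $\min_i(-s_i)$ from below in a way that produces the factor $(\Delta x)^\alpha/x_R^\alpha$. Expanding the row sum using the five-case formula (\ref{2.5}) and reindexing each tail, one obtains the telescoping identity
\[
  \sum_{m=0}^{N_x} g^\alpha_{i,m} \;=\; \sum_{k=0}^{i+1} g^\alpha_k \;+\; \sum_{k=0}^{N_x-i+1} g^\alpha_k.
\]
Since the boundary entries $g^\alpha_{i,0}$ and $g^\alpha_{i,N_x}$ are positive (Lemma \ref{lemma3.3}(1)), the interior row sum satisfies
$s_i \le \sum_{k=0}^{i+1} g^\alpha_k + \sum_{k=0}^{N_x-i+1} g^\alpha_k$. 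Applying Lemma \ref{lemma3.4} to each of the two partial sums (and using $i,\,N_x-i\le N_x=x_R/\Delta x$) gives
\[
  -s_i \;\ge\; \frac{1}{|\Gamma(1-\alpha)|}\!\left(\frac{1}{i^\alpha}+\frac{1}{(N_x-i)^\alpha}\right)
        \;\ge\; \frac{2(\Delta x)^\alpha}{|\Gamma(1-\alpha)|\,x_R^\alpha}.
\]

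Plugging this uniform bound back and dividing by $(\Delta x)^\alpha$ produces exactly
\[
  -\frac{1}{(\Delta x)^\alpha}(A_\alpha v,v)\;\ge\; \frac{2}{|\Gamma(1-\alpha)|\,x_R^\alpha}\,\|v\|^2
  \;=\; -\frac{2}{\Gamma(1-\alpha)\,(x_R)^\alpha}\,\|v\|^2,
\]
and positivity of the right-hand side is immediate from $\Gamma(1-\alpha)<0$ for $\alpha\in(1,2)$. The main obstacle I expect is the row-sum bookkeeping: correctly collapsing the five-case definition (\ref{2.5}) into the clean expression $\sum_{k=0}^{i+1} g^\alpha_k + \sum_{k=0}^{N_x-i+1} g^\alpha_k$, checking the edge cases $i=1$ and $i=N_x-1$ where the ``$m=i\pm1$'' entries coincide with the boundary columns, and verifying that discarding the two boundary-column contributions indeed weakens $s_i$ in the favourable direction. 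Once that algebraic identity is in hand, the rest is a direct appeal to Lemmas \ref{lemma3.3} and \ref{lemma3.4}.
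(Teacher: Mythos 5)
Your proof is correct, and it rests on the same two pillars as the paper's argument --- the sign structure of the weights $g^\alpha_k$ (Lemma \ref{lemma3.3}) and the partial-sum bound of Lemma \ref{lemma3.4} --- but it organizes the quadratic-form estimate differently. The paper works with the Toeplitz factor $B_\alpha$ rather than with $A_\alpha$ itself: it expands $(B_\alpha v,v)$ along diagonals, applies $v_iv_m\le\tfrac12(v_i^2+v_m^2)$ to each positive-coefficient diagonal, obtains $(B_\alpha v,v)\le\bigl(\sum_{k=0}^{N_x+1}g^\alpha_k\bigr)\|v\|^2$ in one stroke, then doubles and invokes Lemma \ref{lemma3.4} a single time, at $i=N_x$. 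Your route --- the sum-of-squares identity $v^TA_\alpha v=\sum_i s_i v_i^2-\sum_{i<m}(A_\alpha)_{im}(v_i-v_m)^2$ for a symmetric matrix with positive off-diagonal entries --- is the same underlying inequality written in row-sum rather than diagonal-sum form, but it obliges you to establish the two-tail identity $\sum_{m=0}^{N_x}g^\alpha_{i,m}=\sum_{k=0}^{i+1}g^\alpha_k+\sum_{k=0}^{N_x-i+1}g^\alpha_k$ (which does check out against (\ref{2.5}), including the edge cases $i=1$ and $i=N_x-1$), to discard the positive boundary-column entries $m=0$ and $m=N_x$, and to apply Lemma \ref{lemma3.4} twice per row. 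The modest payoff is that your per-row bound $-s_i\ge|\Gamma(1-\alpha)|^{-1}\bigl(i^{-\alpha}+(N_x-i)^{-\alpha}\bigr)$ is in fact slightly sharper than the uniform bound you keep (its minimum over $i$ is $2^{1+\alpha}N_x^{-\alpha}/|\Gamma(1-\alpha)|$ rather than $2N_x^{-\alpha}/|\Gamma(1-\alpha)|$), whereas the paper's single global sum involves less bookkeeping and lands on the same stated constant.
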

\begin{proof}
Let the vector $v=(v_1,v_2,\ldots,v_{N_x-1})^T$ with $v_0=v_{N_x}=0$.
From (\ref{2.6}) and Lemma  \ref{lemma3.3}, there exists
\begin{equation*}
\begin{split}
(B_\alpha v,v)
&=\Delta x\sum_{i=1}^{N_x-1} \left(\sum_{k=0}^{N_x-i}g^\alpha_kv_{i+k-1}  \right)v_i
              =\Delta x\sum_{k=0}^{N_x-1}g^\alpha_k \left(\sum_{i=1}^{N_x-k}v_{i+k-1} v_i \right)\\
&=g^\alpha_1 \Delta x\sum_{i=1}^{N_x-1}v_i^2 +\left(g^\alpha_0+g^\alpha_2\right) \Delta x\sum_{i=1}^{N_x-2}v_iv_{i+1}+\Delta x\sum_{k=3}^{N_x-1}g^\alpha_k \left(\sum_{i=1}^{N_x-k}v_{i+k-1} v_i \right)\\
&\leq g^\alpha_1 ||v||^2 +\left(g^\alpha_0+g^\alpha_2\right) \Delta x\sum_{i=1}^{N_x-2}\frac{v_i^2+v_{i+1}^2}{2}+\Delta x\sum_{k=3}^{N_x-1}g^\alpha_k \left(\sum_{i=1}^{N_x-k}\frac{v_i^2+v_{i+k-1}^2}{2} \right)\\
&\leq  \left(\sum_{k=0}^{M-1}g^\alpha_k \right)||v||^2\leq  \left(\sum_{k=0}^{N_x+1}g^\alpha_k \right)||v||^2.
\end{split}
\end{equation*}
Since $(B^T_\alpha v,v)=(B_\alpha v,v)$ and $A_\alpha=B^T_\alpha+B_\alpha$, we have
\begin{equation*}
  (A_\alpha v,v)\leq  2\left(\sum_{k=0}^{N_x+1}g^\alpha_k \right)||v||^2<0.
\end{equation*}
Using the above inequality  and Lemma \ref{lemma3.4}, we obtain
\begin{equation*}
\begin{split}
-  \frac{1}{(\Delta x)^\alpha}(A_\alpha v,v)
&\geq -\frac{2}{(\Delta x)^\alpha} \left(\sum_{k=0}^{N_x+1}g^\alpha_k \right)||v||^2
\geq \frac{2}{(\Delta x)^\alpha}   \frac{-1}{(N_x)^\alpha\Gamma(1-\alpha)}||v||^2\\
&\geq -\frac{2}{(x_R)^\alpha\Gamma(1-\alpha)}||v||^2>0 ~~{\rm with}~~\Omega=(0,x_R),~~v \in {\mathbb{R}}^{N_x-1}.
\end{split}
\end{equation*}
The proof is completed.
\end{proof}

\begin{lemma}  \label{lemma3.6}
Let $0<\gamma<1$ and $\sigma=\frac{\gamma}{1-\gamma}$. Then for any vector  $V_{i}=(v^{1}_{i},v^{2}_{i},\cdots,v^{N}_{i}) \in R^{N}$, we have
\begin{equation*}
     \sum ^{N}_{n=1}{ \left( v^{n}_{i}(1-e^{-\sigma \tau})
     -  \sum^{n-1}_{k=1} {v^{k}_{i}}  e^{-\sigma(n-1-k)\tau}\left(1-e^{-\sigma \tau}\right)^2 \right) v^{n}_{i} } \geq 0.
\end{equation*}
\end{lemma}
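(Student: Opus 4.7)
The plan is to show the quadratic form is non-negative by estimating the cross terms with an elementary AM-GM-type inequality and then exploiting the geometric-series structure of the weights $e^{-\sigma(n-1-k)\tau}$. First I would simplify notation by setting $q := e^{-\sigma\tau} \in (0,1)$ and $\lambda := 1-q \in (0,1)$, so that the quantity to be bounded reads
\begin{equation*}
S := \lambda \sum_{n=1}^{N}(v^n_i)^2 - \lambda^2 \sum_{n=1}^{N}\sum_{k=1}^{n-1} q^{n-1-k}\, v^k_i v^n_i.
\end{equation*}

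Next I would bound the double sum. Because the kernel $\lambda^2 q^{n-1-k}$ is strictly positive, the elementary inequality $v^k_i v^n_i \leq \tfrac12\bigl((v^k_i)^2+(v^n_i)^2\bigr)$ yields termwise
\begin{equation*}
\lambda^2 \sum_{n=1}^{N}\sum_{k=1}^{n-1} q^{n-1-k} v^k_i v^n_i \leq \frac{\lambda^2}{2}\sum_{n=1}^{N}\sum_{k=1}^{n-1} q^{n-1-k}\bigl((v^k_i)^2+(v^n_i)^2\bigr).
\end{equation*}
After swapping the order of summation and applying the geometric identity $\sum_{k=1}^{n-1}q^{n-1-k}=(1-q^{n-1})/\lambda$ (and its twin $\sum_{n=k+1}^{N}q^{n-1-k}=(1-q^{N-k})/\lambda$), the right-hand side collapses to $\tfrac{\lambda}{2}\sum_{n=1}^{N}(v^n_i)^2\bigl[2-q^{n-1}-q^{N-n}\bigr]$.

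Substituting this upper bound back into $S$, the $\lambda\sum (v^n_i)^2$ terms cancel against the contribution of ``$2$'' inside the bracket, leaving
\begin{equation*}
S \;\geq\; \frac{\lambda}{2}\sum_{n=1}^{N}(v^n_i)^2\bigl[q^{n-1}+q^{N-n}\bigr] \;\geq\; 0,
\end{equation*}
since $\lambda>0$ and $q=e^{-\sigma\tau}>0$. This is the claim, and incidentally gives a strict positive lower bound whenever some $v^n_i\neq 0$.

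I do not anticipate a serious obstacle. The only minor subtlety is that the cross products $v^k_i v^n_i$ can be negative, but this is harmless: the bound $v^k_i v^n_i \leq |v^k_i v^n_i| \leq \tfrac12\bigl((v^k_i)^2+(v^n_i)^2\bigr)$ is valid for signed data, and the positivity of the memory kernel $\lambda^2 q^{n-1-k}$ preserves the inequality direction after summation. An alternative route is to recognize $S=V_i^{T}MV_i$ for a symmetric matrix $M$ whose diagonal $\lambda$ is balanced by the geometric off-diagonal row sums $\tfrac{\lambda}{2}[2-q^{n-1}-q^{N-n}]$, yielding weak diagonal dominance with positive diagonal and hence positive semi-definiteness via Lemma~\ref{lemma3.2}; however, the direct AM-GM computation above is cleaner and produces an explicit residual.
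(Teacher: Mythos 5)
Your proof is correct, and it is a genuinely self-contained variant of what the paper does. The paper takes a matrix route: it writes the sum as $(AV_i,V_i)$ for a lower-triangular Toeplitz matrix $A$ with diagonal $b=1-e^{-\sigma\tau}$ and subdiagonal entries $a_l=-e^{-\sigma(l-1)\tau}(1-e^{-\sigma\tau})^2$, forms the symmetric part $H=\tfrac{A+A^T}{2}$, bounds every off-diagonal absolute row sum by the uniform quantity $\sum_{l=1}^{N-1}|a_l|=(1-e^{-\sigma\tau})(1-e^{-\sigma(N-1)\tau})<b$, and then invokes Lemma~\ref{lemma3.1} (positive definiteness via the symmetric part) and Lemma~\ref{lemma3.2} (symmetric strictly diagonally dominant with positive diagonal implies positive definite). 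Your argument is the same mechanism unpacked at the level of the quadratic form: the termwise bound $v^k_iv^n_i\le\tfrac12\bigl((v^k_i)^2+(v^n_i)^2\bigr)$ followed by the two geometric sums is exactly the computation hiding inside the diagonal-dominance lemma, and your bracket $\tfrac{\lambda}{2}\bigl[2-q^{n-1}-q^{N-n}\bigr]$ is precisely the $n$-th off-diagonal absolute row sum of $H$. What your version buys is self-containedness (no appeal to the cited linear-algebra facts) and a sharper, row-by-row estimate yielding the explicit residual $S\ge\tfrac{\lambda}{2}\sum_{n}(v^n_i)^2\bigl(q^{n-1}+q^{N-n}\bigr)$, hence strict positivity whenever $V_i\neq 0$; the paper's uniform bound is cruder but shorter. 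One cosmetic remark on your closing aside: the dominance is in fact strict rather than weak, as your own residual already shows.
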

\begin{proof}
By the mathematical induction method, we have
\begin{equation*} \label{3.3}
\begin{split}
\sum ^{N}_{n=1}{ \left( v^{n}_{i}(1-e^{-\sigma \tau})
- \sum^{n-1}_{k=1} {v^{k}_{i}}  e^{-\sigma(n-1-k)\tau}\left(1-e^{-\sigma \tau}\right)^2 \right) v^{n}_{i} }
 = (AV_{i},V_{i}),
\end{split}
\end{equation*}
where
\begin{equation*}
    A=\left [ \begin{matrix}
    b            &         0       &         0          &     \cdots   &         0         &       0    \\
    a_{1}        &         b       &         0          &     \cdots   &         0         &       0     \\
    a_{2}        &       a_{1}     &         b          &     \cdots   &         0         &       0      \\
    \vdots       &      \vdots     &       \vdots       &     \ddots   &      \vdots       &    \vdots     \\
    a_{N-2}      &      a_{N-3}    &      a_{N-4}       &     \cdots   &         b         &       0        \\
    a_{N-1}      &      a_{N-2}    &      a_{N-3}       &     \cdots   &       a_{1}       &       b
\end{matrix}
    \right ]_{N \times N}
\end{equation*}
with $b=1-e^{-\sigma \tau}$ and $ a_{l} = - e^{-\sigma(l-1)\tau}\left(1-e^{-\sigma \tau}\right)^2 < 0, l=1,2,\cdots,N-1$.
We next prove the matrix $A$ is positive definite.  Since
\begin{equation*}
    H=\frac{A+A^T}{2}=\left [ \begin{matrix}
     b                 & \frac{a_{1} }{2}   &  \frac{a_{2} }{2}    &   \cdots   & \frac{a_{N-2} }{2} &  \frac{a_{N-1} }{2}   \\
    \frac{a_{1} }{2}   &  b                 &  \frac{a_{1} }{2}    &   \cdots   & \frac{a_{N-3} }{2} &  \frac{a_{N-2} }{2}    \\
    \frac{a_{2} }{2}   & \frac{a_{1} }{2}   &   b                  &   \cdots   & \frac{a_{N-4} }{2} &  \frac{a_{N-3} }{2}     \\
    \vdots             & \vdots             &  \vdots              &   \ddots   & \vdots             &  \vdots                  \\
    \frac{a_{N-2} }{2} & \frac{a_{N-3} }{2} &  \frac{a_{N-4} }{2}  &   \cdots   &  b                 &  \frac{a_{1} }{2}         \\
    \frac{a_{N-1} }{2} & \frac{a_{N-2} }{2} &  \frac{a_{N-3} }{2}  &   \cdots   & \frac{a_{1} }{2}   &   b
    \end{matrix}
    \right ]_{N \times N}
\end{equation*}
and
\begin{equation*}
\begin{split}
       \sum^{N}_{j=1,j\neq i}{|h_{i,j}|}
       & \leq 2 \sum^{N-1}_{l=1}{\Big|\frac{a_{l}}{2}\Big|} = - \sum^{N-1}_{l=1}{a_{l}}
         =  \sum^{N-1}_{l=1}e^{-\sigma(l-1)\tau}\left(1-e^{-\sigma \tau}\right)^2 \\
       & =  \left(1-e^{-\sigma \tau}\right)\left(1-  e^{- \sigma (N-1) \tau} \right) <  1-e^{-\sigma \tau} = b,
\end{split}
\end{equation*}
it yields  the matrix $H$ is strictly diagonally dominant.
Form Lemmas \ref{lemma3.1} and \ref{lemma3.2}, we know that the matrix $A$ is positive definite. The proof is completed.
\end{proof}
For simplifying the proof of the stability  and convergence,  we first provide the following  a priori estimate.
\begin{lemma}  \label{lemma3.7}
Suppose ${v^{n}_{i}}$ is the solution of the difference scheme (\ref{2.10}), i.e.,
\begin{equation} \label{3.3}
\begin{split}
    &  \frac{1}{(1-\gamma)\sigma \tau} \left[ v^{n}_{i}(1-e^{-\sigma \tau}) -  \sum^{n-1}_{k=1} v^{k}_{i}  e^{-\sigma(n-1-k)\tau}\left(1-e^{-\sigma \tau}\right)^2\right] \\
    &\quad = \frac{ \kappa_\alpha }{ \Gamma(4-\alpha)(\Delta x)^{\alpha} } \sum^{N_x}_{m=0}{g^{\alpha}_{i,m} v^{n}_{m}} + f^{n}_{i} + \frac{1}{(1-\gamma)\sigma \tau} v^{0}_{i}(1-e^{-\sigma \tau})e^{-\sigma (n-1) \tau}, \\
    & v^{0}_{i} = \phi _{i},~~~1 \leq i \leq N_x-1,\\
    & v^{n}_{0} = v^{n}_{M} = 0, ~~~ 0 \leq n \leq N.
\end{split}
\end{equation}
Then for any positive integer $N$ with $N \tau \leq T$, we have
\begin{equation*}
\begin{split}
&\tau \sum^{N}_{n=1}{||v^{n}||^2 }\\
& \leq  \frac{ \left(\Gamma(1-\alpha) \Gamma(4-\alpha )  (x_R)^{ \alpha}\right)^2 }{ 2  \kappa_\alpha^2} \cdot \tau\sum^{N}_{n=1} ||f^{n}||^2
       + \frac{     \left(\Gamma(1-\alpha) \Gamma(4-\alpha)  (x_R)^{  \alpha}\right)^2 T }{ (1-\gamma)^2 \kappa_\alpha^2 } ||v^{0}||^2.
\end{split}
\end{equation*}
\end{lemma}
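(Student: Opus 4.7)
The plan is the standard energy argument: take the discrete $L^2$ inner product of (\ref{3.3}) with $v^n$, multiply through by $(1-\gamma)\sigma\tau$ to clear the time-step denominator, and then sum over $n=1,2,\ldots,N$. The resulting identity has the schematic form
\begin{equation*}
\underbrace{\sum_{n=1}^{N}\Bigl[(1-e^{-\sigma\tau})\|v^n\|^2-(1-e^{-\sigma\tau})^2\sum_{k=1}^{n-1}e^{-\sigma(n-1-k)\tau}(v^k,v^n)\Bigr]}_{\text{time part}}=\underbrace{\frac{\kappa_\alpha(1-\gamma)\sigma\tau}{\Gamma(4-\alpha)(\Delta x)^\alpha}\sum_{n=1}^{N}(A_\alpha v^n,v^n)}_{\text{spatial part}}+(1-\gamma)\sigma\tau\sum_{n=1}^{N}(f^n,v^n)+\sum_{n=1}^{N}(1-e^{-\sigma\tau})e^{-\sigma(n-1)\tau}(v^0,v^n).
\end{equation*}
Applying Lemma \ref{lemma3.6} pointwise in $i$ and then summing with weight $\Delta x$ shows that the time part is nonnegative. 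Applying Lemma \ref{lemma3.5} to each spatial term and transposing yields
\begin{equation*}
\tilde c\,(1-\gamma)\sigma\tau\sum_{n=1}^{N}\|v^n\|^2\le (1-\gamma)\sigma\tau\sum_{n=1}^{N}(f^n,v^n)+\sum_{n=1}^{N}(1-e^{-\sigma\tau})e^{-\sigma(n-1)\tau}(v^0,v^n),
\end{equation*}
where $\tilde c:=-2\kappa_\alpha/(\Gamma(4-\alpha)\Gamma(1-\alpha)(x_R)^\alpha)>0$ since $\Gamma(1-\alpha)<0$ for $1<\alpha<2$.

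Next I would dispatch the two right-hand terms by Cauchy--Schwarz and Young's inequality with carefully tuned parameters. For the forcing term, Young's inequality with parameter $\epsilon_1=\tilde c/2$ produces $\frac{\tau}{\tilde c}\sum_n\|f^n\|^2+\frac{\tilde c\tau}{4}\sum_n\|v^n\|^2$. For the initial term, I first write $(v^0,v^n)\le \frac{\epsilon_2}{2}\|v^0\|^2+\frac{1}{2\epsilon_2}\|v^n\|^2$, then exploit the two elementary bounds $\sum_{n=1}^{N}(1-e^{-\sigma\tau})e^{-\sigma(n-1)\tau}=1-e^{-\sigma N\tau}\le 1-e^{-\sigma T}$ and $(1-e^{-\sigma\tau})e^{-\sigma(n-1)\tau}\le \sigma\tau$. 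Choosing $\epsilon_2=2/((1-\gamma)\tilde c)$ makes the second coefficient in $\|v^n\|^2$ equal to $\tilde c/4$ as well, so that together they can be absorbed into the left-hand side, leaving $\frac{\tilde c}{2}\tau\sum_n\|v^n\|^2$.

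After that absorption the inequality reads
\begin{equation*}
\frac{\tilde c}{2}\,\tau\sum_{n=1}^{N}\|v^n\|^2\le \frac{\tau}{\tilde c}\sum_{n=1}^{N}\|f^n\|^2+\frac{2(1-e^{-\sigma T})}{\tilde c(1-\gamma)^2\sigma}\|v^0\|^2,
\end{equation*}
and multiplying by $2/\tilde c$ together with $(1-e^{-\sigma T})/\sigma\le T$ yields exactly the claimed bound once the definition of $\tilde c$ is substituted. The main obstacle I anticipate is keeping the many constants coherent: the signs demand care because $\Gamma(1-\alpha)<0$ forces $(A_\alpha v,v)<0$ and makes the inequality in Lemma \ref{lemma3.5} one to which both sides must be multiplied by a positive factor, and the Young parameters must be picked so that the coefficient $(1-\gamma)^{-2}$ in the target comes out correctly from the product of $\epsilon_2$ with the $1/((1-\gamma)\sigma)$ prefactor of the initial term. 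Everything else is bookkeeping that follows from the three cited lemmas.
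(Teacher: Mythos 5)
Your proposal is correct and follows essentially the same route as the paper: the same energy argument (inner product with $v^n$, sum over $n$), nonnegativity of the temporal sum via Lemma \ref{lemma3.6} applied componentwise, coercivity of the spatial operator via Lemma \ref{lemma3.5}, and Cauchy--Schwarz plus Young's inequality with parameters tuned so that the $\|v^n\|^2$ contributions are absorbed into the left-hand side. The only differences are cosmetic bookkeeping (the paper bounds $(1-e^{-\sigma\tau})e^{-\sigma(n-1)\tau}/(\sigma\tau)\le 1$ termwise and uses $N\tau\le T$, whereas you evaluate the geometric sum exactly and then use $1-e^{-\sigma T}\le \sigma T$), and both yield the stated constants.
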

\begin{proof}
Multiplying (\ref{3.3}) by $(\Delta x)v^{n+1}_{i}$ and summing up for $i$ from $1$ to $N_x-1$, we get
\begin{equation*}
\begin{split}
&\frac{1}{(1-\gamma)\sigma \tau}\sum^{N_x-1}_{i=1} \left[ v^{n}_{i}(1-e^{-\sigma \tau}) -  \sum^{n-1}_{k=1} v^{k}_{i}e^{-\sigma(n-1-k)\tau}\left(1-e^{-\sigma \tau}\right)^2 \right] hv^{n}_{i}\\
&= \sum^{N_x-1}_{i=1}\left[\frac{ \kappa_\alpha }{ \Gamma(4-\alpha)(\Delta x)^{\alpha} } \sum^{N_x}_{m=0}{g^{\alpha}_{i,m} v^{n}_{m}} + f^{n}_{i}
   + \frac{1}{(1-\gamma)\sigma \tau} v^{0}_{i}(1-e^{-\sigma \tau})e^{-\sigma (n-1) \tau} \right] (\Delta x)v^{n}_{i} \\
&=\frac{ \kappa_\alpha }{ \Gamma(4-\alpha)(\Delta x)^{\alpha} } (A_{\alpha} v^{n} , v^{n} ) + (f^{n} , v^{n} ) + \frac{1}{(1-\gamma)\sigma \tau} (1-e^{-\sigma \tau})e^{-\sigma (n-1) \tau} (v^{0} , v^{n} ).
\end{split}
\end{equation*}
Multiplying the above equation  by $\tau $ and summing up for $n$ from $1$ to $N$, we obtain
\begin{equation*}
\begin{split}
& \frac{\tau}{(1-\gamma)\sigma\tau} \sum^{N_x-1}_{i=1} \sum^{N}_{n=1} \left[ v^{n}_{i}(1-e^{-\sigma \tau})
- \sum^{n-1}_{k=1} v^{k}_{i} e^{-\sigma(n-1-k)\tau}\left(1-e^{-\sigma \tau}\right)^2 \right] (\Delta x)v^{n}_{i}\\
& = \tau \sum^{N}_{n=1}{\frac{ \kappa_\alpha }{ \Gamma(4-\alpha)(\Delta x)^{\alpha} } (A_{\alpha} v^{n} , v^{n} )} + \tau \sum^{N}_{n=1} (f^{n} , v^{n} )
+  \frac{\tau  (1-e^{-\sigma \tau})}{(1-\gamma)\sigma \tau} \sum^{N}_{n=1}e^{-\sigma (n-1) \tau} (v^{0} , v^{n} ).
\end{split}
\end{equation*}
From  Lemma \ref{lemma3.5} and Lemma \ref{lemma3.6}, we have
\begin{equation*}
\begin{split}
      &  -\frac{2}{ \Gamma(1-\alpha)} \frac{ \kappa_\alpha}{ \Gamma(4-\alpha) (x_R)^{\alpha} } \tau \sum^{N}_{n=1}{||v^{n}||^{2} } \leq  - \tau \sum^{N}_{n=1}{\frac{ \kappa_\alpha }{ \Gamma(4-\alpha)(\Delta x)^{\alpha} } (A_{\alpha} v^{n} , v^{n} )} \\
      & \leq \tau \sum^{N}_{n=1} (f^{n} , v^{n} ) + \frac{1}{(1-\gamma)\sigma \tau} (1-e^{-\sigma \tau}) \tau \sum^{N}_{n=1}e^{-\sigma (n-1) \tau} (v^{0} , v^{n} ) \\
      & \leq \tau \sum^{N}_{n=1} ||f^{n}|| \cdot ||v^{n}|| + \frac{1}{(1-\gamma)} \tau \sum^{N}_{n=1}  ||v^{0}|| \cdot ||v^{n}|| \\
      & \leq \tau \sum^{N}_{n=1}{(\epsilon ||v^{n}||^2 + \frac{||f^{n}||^2}{4 \epsilon} ) } + \frac{1}{(1-\gamma)} \tau \sum^{N}_{n=1} \left(\eta ||v^{n}||^2 + \frac{||v^{0}||^2}{4 \eta} \right),
\end{split}
\end{equation*}
where $\epsilon, \eta >0 $ and $(f^{n} , v^{n}) \leq ||f^{n}|| \cdot ||v^{n}||$.

Taking $\epsilon = - \frac{1}{  \Gamma(1-\alpha)} \frac{ \kappa_\alpha }{ \Gamma(4-\alpha) (x_R)^{\alpha} } $ and $ \eta = - \frac{ 1 - \gamma}{ 2 \Gamma(1-\alpha)} \frac{ \kappa_\alpha }{ \Gamma(4-\alpha) (x_R)^{\alpha} } $,
and using the above inequality, there exists
\begin{equation*}
\begin{split}
        & - \frac{1}{ 2 \Gamma(1-\alpha)} \frac{ \kappa_\alpha}{ \Gamma(4-\alpha) (x_R)^{\alpha} } \tau  \sum^{N}_{n=1}{||v^{n}||^2 }  \leq \frac{1}{4 \epsilon} \tau \sum^{N}_{n=1}{ ||f^{n}||^2 } + \frac{1}{(1-\gamma)4 \eta } \tau \sum^{N}_{n=1}  ||v^{0}||^2 ,
\end{split}
\end{equation*}
and
\begin{equation*}
\begin{split}
\tau \sum^{N}_{n=1}{||v^{n}||^2 }
& \leq \frac{1-\gamma}{4 \epsilon \eta } \cdot \tau\sum^{N}_{n=1}{ ||f^{n}||^2 } + \frac{T}{4 \eta^2 }  ||v^{0}||^2 \\
& = \frac{ \left(\Gamma(1-\alpha) \Gamma(4-\alpha )  (x_R)^{ \alpha}\right)^2 }{ 2  \kappa_\alpha^2}  \tau\sum^{N}_{n=1} ||f^{n}||^2
      \! +\! \frac{     \left(\Gamma(1-\alpha) \Gamma(4-\alpha)  (x_R)^{  \alpha}\right)^2 T }{ (1-\gamma)^2 \kappa_\alpha^2 } ||v^{0}||^2.
\end{split}
\end{equation*}
The proof is completed.
\end{proof}

\subsection{Convergence and stability for 1D  }
In this subsection, we prove that the scheme (\ref{2.10}) is unconditionally stable and convergence by two methods, i.e.,  a prior estimate and mathematical induction
which correspond to  the discrete $L^2$-norm and $L_\infty$ norm.
\begin{theorem} \label{theorem3.1}
The difference scheme (\ref{2.10}) is unconditionally stable.
\begin{proof}
  From Lemma \ref{lemma3.7}, the desired results is obtained.
\end{proof}
\end{theorem}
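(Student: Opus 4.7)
The plan is to derive the stability statement directly from the a priori estimate in Lemma \ref{lemma3.7} by exploiting the linearity of the scheme (\ref{2.10}). First, I would observe that scheme (\ref{2.10}) is linear in the triple $(u^n, u^0, f^n)$: the left-hand side is a linear functional of the unknowns $\{u^k_i\}_{k=1}^n$, and the right-hand side depends linearly on the data $u^0_i$ and $f^n_i$. Consequently, if $\{u^n_i\}$ and $\{\tilde{u}^n_i\}$ are two solutions corresponding to data $(u^0, f^n)$ and $(\tilde{u}^0, \tilde{f}^n)$ respectively, the difference $v^n_i := u^n_i - \tilde{u}^n_i$ solves a problem of exactly the form (\ref{3.3}) with initial condition $v^0_i = u^0_i - \tilde{u}^0_i$ and forcing $v^0_i$-dependent right-hand side plus $f^n_i - \tilde{f}^n_i$.

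Next, I would apply Lemma \ref{lemma3.7} to this difference $v^n_i$. The lemma produces the bound
\begin{equation*}
\tau \sum_{n=1}^{N}\|v^n\|^2 \leq \frac{\bigl(\Gamma(1-\alpha)\Gamma(4-\alpha)(x_R)^\alpha\bigr)^2}{2\kappa_\alpha^2}\,\tau\sum_{n=1}^{N}\|f^n-\tilde{f}^n\|^2 + \frac{\bigl(\Gamma(1-\alpha)\Gamma(4-\alpha)(x_R)^\alpha\bigr)^2 T}{(1-\gamma)^2\kappa_\alpha^2}\,\|v^0\|^2,
\end{equation*}
which is precisely the statement of unconditional stability in the discrete $L^2$ norm: small perturbations in the initial datum and the source produce controlled (independent of $\tau$ and $\Delta x$) perturbations in the solution, for any choice of mesh parameters.

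Since all of the heavy lifting has already been carried out in Lemmas \ref{lemma3.5}, \ref{lemma3.6}, and \ref{lemma3.7}, there is essentially no obstacle: the only thing to verify is that the difference $v^n$ indeed satisfies the same scheme with the modified data, which follows immediately from the linearity of (\ref{2.10}) in $u$, $f$, and the initial condition. The constants appearing in the bound are independent of $\tau$ and $\Delta x$, yielding unconditional stability. I would therefore keep the proof short, simply citing Lemma \ref{lemma3.7} after noting the linearity argument, exactly as the authors do.
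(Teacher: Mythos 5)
Your proposal is correct and follows exactly the paper's route: the authors' entire proof is a one-line citation of Lemma \ref{lemma3.7}, and your linearity argument merely makes explicit the standard step (perturbed data give a difference $v^n$ satisfying (\ref{3.3})) that the citation implicitly relies on. Nothing further is needed.
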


\begin{theorem} \label{theorem3.2}
Let $u^n_i$ be the approximate solution of $u(x_i,t_n)$ computed by the difference scheme (\ref{2.10}). Let $\varepsilon ^n_i = u(x_i,t_n)- u^n_i $. Then
\begin{equation*}
      \tau \sum^{N}_{n=1}||\varepsilon^{n}|| \leq
       \frac{ |\Gamma(1-\alpha)| \Gamma(4-\alpha )  (x_R)^{ \alpha+1/2} T}{ \sqrt{2}  \kappa_\alpha}  C_{u} \cdot (\tau^{2}+(\Delta x)^{2}),
\end{equation*}
where $C_{u}$ is defined by (\ref{2.8}) and $(x_{i},t_{n}) \in (0,x_R) \times (0,T]$ with $N\tau \leq T$.
\end{theorem}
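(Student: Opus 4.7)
The plan is to view this as a routine corollary of the \emph{a priori} estimate in Lemma \ref{lemma3.7} applied to the error $\varepsilon^n_i = u(x_i,t_n) - u^n_i$, together with the local truncation bound (\ref{2.8}). I first subtract the scheme (\ref{2.10}) from the consistent form (\ref{2.7}) of the continuous equation. Because both equations share the same linear structure, the difference satisfies
\begin{equation*}
\frac{1}{(1-\gamma)\sigma\tau}\!\left[\varepsilon^n_i(1-e^{-\sigma\tau}) - \sum_{k=1}^{n-1}\varepsilon^k_i\, e^{-\sigma(n-1-k)\tau}(1-e^{-\sigma\tau})^2\right] = \frac{\kappa_\alpha}{\Gamma(4-\alpha)(\Delta x)^\alpha}\sum_{m=0}^{N_x} g^\alpha_{i,m}\varepsilon^n_m + r^n_i,
\end{equation*}
with the initial data $\varepsilon^0_i=0$ and boundary values $\varepsilon^n_0=\varepsilon^n_{N_x}=0$. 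Thus $\varepsilon^n_i$ plays the role of $v^n_i$ in Lemma \ref{lemma3.7}, with source term $r^n_i$ in place of $f^n_i$ and with vanishing initial datum, so that the $\|v^0\|$-contribution drops out.

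Next I invoke Lemma \ref{lemma3.7} directly to obtain
\begin{equation*}
\tau\sum_{n=1}^{N}\|\varepsilon^n\|^2 \;\leq\; \frac{\left(\Gamma(1-\alpha)\Gamma(4-\alpha)(x_R)^\alpha\right)^2}{2\kappa_\alpha^2}\, \tau\sum_{n=1}^{N}\|r^n\|^2.
\end{equation*}
To control $\|r^n\|$, I use the pointwise bound (\ref{2.8}): since $\|r^n\|^2 = \Delta x\sum_{i=1}^{N_x-1}|r^n_i|^2 \leq x_R\, C_u^2(\tau^2+(\Delta x)^2)^2$, summing over $n$ yields $\tau\sum_{n=1}^N\|r^n\|^2 \leq T x_R C_u^2 (\tau^2+(\Delta x)^2)^2$. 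Substituting gives an $\ell^2$-in-time bound on $\|\varepsilon^n\|$ of order $\tau^2 + (\Delta x)^2$.

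Finally, to pass from the squared sum to the first-power sum appearing in the statement, I apply Cauchy--Schwarz in the time index:
\begin{equation*}
\tau\sum_{n=1}^{N}\|\varepsilon^n\| \;\leq\; \Bigl(\tau\sum_{n=1}^{N} 1\Bigr)^{1/2}\Bigl(\tau\sum_{n=1}^{N}\|\varepsilon^n\|^2\Bigr)^{1/2} = \sqrt{T}\,\Bigl(\tau\sum_{n=1}^{N}\|\varepsilon^n\|^2\Bigr)^{1/2}.
\end{equation*}
Combining the two estimates and collecting constants produces exactly the prefactor $|\Gamma(1-\alpha)|\Gamma(4-\alpha)(x_R)^{\alpha+1/2}T/(\sqrt{2}\kappa_\alpha)$ stated in the theorem, multiplied by $C_u(\tau^2+(\Delta x)^2)$.

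The main obstacle is essentially bookkeeping rather than mathematics: one has to check that the error equation really matches the hypothesis of Lemma \ref{lemma3.7} (including the correct role of the initial datum term, which vanishes here since $\varepsilon^0 = 0$), and then track the factors of $T$, $x_R$, and $\Delta x$ through the norm bound on $r^n$ and the Cauchy--Schwarz step so that the final constant matches the claimed $(x_R)^{\alpha+1/2}T$ dependence. No new ingredients beyond Lemma \ref{lemma3.7}, inequality (\ref{2.8}), and discrete Cauchy--Schwarz are needed.
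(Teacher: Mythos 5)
Your proposal is correct and follows essentially the same route as the paper: subtract (\ref{2.7}) from (\ref{2.10}) to get the error equation with source $r^n_i$ and vanishing initial datum, apply Lemma \ref{lemma3.7} with $f^n$ replaced by $r^n$ and the $\|v^0\|$-term dropping out, bound $\|r^n\|^2\leq x_R C_u^2(\tau^2+(\Delta x)^2)^2$ via (\ref{2.8}), and finish with discrete Cauchy--Schwarz in time. The constant-tracking, including the $(x_R)^{\alpha+1/2}T$ dependence, matches the paper's computation exactly.
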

\begin{proof}
Let $u(x_{i},t_{n})$ be the exact solution of (\ref{2.0016}) at the mesh point $(x_{i},t_{n})$, and $\varepsilon ^n_i = u(x_i,t_n)- u^n_i $.
Subtracting (\ref{2.7}) from (\ref{2.10}) with $\varepsilon ^0_i = 0$, we obtain
\begin{equation*}
\begin{split}
       &  \frac{1}{(1-\gamma)\sigma \tau} \left[ \varepsilon^{n}_{i}(1-e^{-\sigma \tau}) -  \sum^{n-1}_{k=1} \varepsilon^{k}_{i}  e^{-\sigma(n-1-k)\tau}\left(1-e^{-\sigma \tau}\right)^2   \right]\\
       &= \frac{ \kappa_\alpha }{ \Gamma(4-\alpha)(\Delta x)^{\alpha} } \sum^{N_x}_{m=0}{g^{\alpha}_{i,m} \varepsilon^{n}_{m}} + r^{n}_{i} .
\end{split}
\end{equation*}
From Lemma \ref{lemma3.7} and (\ref{2.8}), it holds
\begin{equation*}
\begin{split}
\tau \sum^{N}_{n=1}{||\varepsilon^{n}||^2 }
& \leq  \frac{ \left(\Gamma(1-\alpha)\Gamma(4-\alpha )(x_R)^{ \alpha}\right)^2 }{ 2 \kappa_\alpha^2} \cdot \tau\sum^{N}_{n=1} ||r^{n}||^2\\
& \leq  \frac{ \left(\Gamma(1-\alpha)\Gamma(4-\alpha )(x_R)^{ \alpha}\right)^2 }{ 2 \kappa_\alpha^2} x_R T C^{2}_{u}\cdot (\tau^{2}+(\Delta x)^{2})^{2}.
\end{split}
\end{equation*}
Using Cauchy-Schwarz inequality for the above inequality, we have
\begin{equation*}
\begin{split}
      \left( \tau \sum^{N}_{n=1}||\varepsilon^{n}|| \right)^{2}
      & \leq \left(  \tau \sum^{N}_{n=1} {1} \right) \left( \tau \sum^{N}_{n=1}||\varepsilon^{n}||^2 \right) \\
      & \leq \frac{ \left(\Gamma(1-\alpha) \Gamma(4-\alpha )  (x_R)^{ \alpha}\right)^2 }{ 2  \kappa_\alpha^2} x_R T^2 C^{2}_{u} \cdot (\tau^{2}+(\Delta x)^{2})^{2},
\end{split}
\end{equation*}
and
\begin{equation*}
      \tau \sum^{N}_{n=1}||\varepsilon^{n}|| \leq
       \frac{ |\Gamma(1-\alpha)| \Gamma(4-\alpha )  (x_R)^{ \alpha+1/2} T}{ \sqrt{2}  \kappa_\alpha}  C_{u} \cdot (\tau^{2}+(\Delta x)^{2}).
\end{equation*}
The proof is completed.
\end{proof}

Besides the discrete $L^2$-norm, the stability and convergence can also be obtained in $L_\infty$ norm by the following mathematical induction.
\begin{theorem} \label{theorem3.3}
The difference scheme (\ref{2.11}) is unconditionally stable.
\end{theorem}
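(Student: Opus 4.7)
The plan is to establish unconditional $L_\infty$ stability by combining a discrete maximum principle at each time level with induction on the time index $n$. Writing (2.11) as $(I - \kappa_{\Delta x,\tau} G^\alpha)u^n = R^n$ with $G^\alpha := (g^\alpha_{i,m})$ and $R^n$ denoting the right-hand side, Lemma~\ref{lemma3.3} tells us that $I - \kappa_{\Delta x,\tau}G^\alpha$ has positive diagonal entries $1 + \kappa_{\Delta x,\tau}|g^\alpha_{i,i}|$, non-positive off-diagonals, and is strictly diagonally dominant by rows, hence is an $M$-matrix admitting a discrete maximum principle at each step.

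First I would apply this maximum principle: picking $i^*$ with $|u^n_{i^*}| = \|u^n\|_\infty$ and taking absolute values in (2.11) at $i=i^*$ gives
\[
\|u^n\|_\infty\bigl(1 + \kappa_{\Delta x,\tau}|g^\alpha_{i^*,i^*}|\bigr) \le \kappa_{\Delta x,\tau}\Bigl(\sum_{m \ne i^*} g^\alpha_{i^*,m}\Bigr)\|u^n\|_\infty + |R^n_{i^*}|,
\]
which together with $\sum_{m \ne i^*} g^\alpha_{i^*,m} \le |g^\alpha_{i^*,i^*}|$ collapses to $\|u^n\|_\infty \le \|R^n\|_\infty$. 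Unpacking $R^n$ by the triangle inequality yields the pivotal recursive estimate
\[
a_n \le (1-e^{-\sigma\tau})\sum_{k=1}^{n-1} e^{-\sigma(n-1-k)\tau}\, a_k + e^{-\sigma(n-1)\tau}\, a_0 + G\,\max_{1 \le k \le N}\|f^k\|_\infty,
\]
where $a_n := \|u^n\|_\infty$ and $G := \gamma\tau/(1-e^{-\sigma\tau}) = (1-\gamma)\sigma\tau/(1-e^{-\sigma\tau})$. Crucially, the weights of $a_0,\ldots,a_{n-1}$ in this recursion sum exactly to $1$.

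Next I would close the induction with the hypothesis $a_n \le a_0 + (G + \gamma t_n)F$, where $F := \max_k\|f^k\|_\infty$. Plugging this hypothesis into the recursion, using the closed-form identity $(1-e^{-\sigma\tau})\sum_{k=1}^{n-1}k\,e^{-\sigma(n-1-k)\tau} = (n-1) - \sum_{j=1}^{n-1}e^{-j\sigma\tau}$ together with $G(1-e^{-\sigma\tau}) = \gamma\tau$, the inductive step reduces to the trivial inequality $e^{-\sigma(n-1)\tau} \ge e^{-\sigma n\tau}$. Since $G \to 1-\gamma$ as $\tau \to 0$ and $t_n \le T$, this delivers
\[
\|u^n\|_\infty \le \|u^0\|_\infty + (G + \gamma T)\,\max_{1 \le k \le N}\|f^k\|_\infty,
\]
uniformly in $\tau$ and $\Delta x$, which is the desired unconditional stability.

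The main obstacle is calibrating the induction hypothesis. A naive choice such as $a_n \le a_0 + nGF$ propagates trivially through the recursion but blows up as $\tau \to 0$ because $nG \sim T/\tau$; a constant bound $a_n \le a_0 + CF$ fails because each step contributes an unavoidable $GF \approx (1-\gamma)F$. The hidden cancellation, visible only after writing the equality version of the recursion and inspecting $a_n - a_{n-1}$, is that consecutive increments are actually of size $\gamma\tau F$, not $GF$. Spotting the correct ansatz $a_n \le a_0 + (G + \gamma t_n)F$ — whose growth is linear in $T$ rather than in $N$ — is the real difficulty; once in hand, the verification is a direct computation using Lemma~\ref{lemma3.3}.
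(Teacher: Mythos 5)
Your proof is correct, and its engine is the same one the paper uses: the sign and diagonal--dominance properties of Lemma \ref{lemma3.3}, evaluation of (\ref{2.11}) at an index where $|u^n_i|$ attains its maximum, and induction on $n$ exploiting the fact that the history weights $e^{-\sigma(n-1-k)\tau}(1-e^{-\sigma\tau})$, $k=1,\dots,n-1$, together with $e^{-\sigma(n-1)\tau}$ sum exactly to one. Where you genuinely diverge is in what you choose to bound. The paper perturbs only the initial data: it sets $\epsilon^n_i=\widetilde u^n_i-u^n_i$ for two solutions of (\ref{2.11}) driven by the same $f$, so the source term cancels from the error equation and the constant ansatz $\|E^s\|_\infty\le\|E^0\|_\infty$ closes the induction in one line. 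You instead prove an a priori bound on the solution itself, $\|u^n\|_\infty\le\|u^0\|_\infty+(G+\gamma t_n)\max_k\|f^k\|_\infty$, which forces you to track the accumulated contribution of $f$; this is exactly what makes the calibrated ansatz $a_n\le a_0+(G+\gamma t_n)F$ necessary, and your identity $(1-e^{-\sigma\tau})\sum_{k=1}^{n-1}k\,e^{-\sigma(n-1-k)\tau}=(n-1)-\sum_{j=1}^{n-1}e^{-j\sigma\tau}$ combined with $G(1-e^{-\sigma\tau})=\gamma\tau$ does close the induction (the step reduces to $\gamma\tau\bigl(1-e^{-\sigma(n-1)\tau}\bigr)\le\gamma\tau$, which is trivially true). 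The trade-off: your statement is strictly stronger --- it quantifies robustness to perturbations of the source as well as of $u^0$, and is in effect the $L_\infty$ analogue of the paper's $L^2$ a priori estimate in Lemma \ref{lemma3.7} --- at the price of the more delicate induction hypothesis, whereas the paper's formulation buys an immediate closing of the induction at the price of saying nothing about sensitivity to $f$.
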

\begin{proof}
Let $ \widetilde u^{n}_{i} $ be the approximate solution of $u^{n}_{i}$, which is the exact solution of the difference scheme (\ref{2.11}).
Denoting $ \epsilon ^n_i = \widetilde u^{n}_{i}- u^n_i $, there exists
\begin{equation*}
\begin{split}
& \left( 1 -\kappa_{\Delta x,\tau}  g^{\alpha}_{i,i} \right) \epsilon^{1}_{i} -\kappa_{\Delta x,\tau} \sum ^{N_x}_{m=0,m \neq i} { g^{\alpha}_{i,m} \epsilon^{1}_{m}}
= \epsilon^{0}_{i}e^{-\sigma (n-1) \tau}, ~~~~ n=1, \\
&\left( 1 -\kappa_{\Delta x,\tau}  g^{\alpha}_{i,i} \right)\epsilon^{n}_{i} \!-\!\kappa_{\Delta x,\tau} \!\!\sum ^{N_x}_{m=0,m \neq i} { g^{\alpha}_{i,m} \epsilon^{n}_{m}}
       =  \sum^{n-1}_{k=1} {\epsilon^{k}_{i}} e^{-\sigma(n-1-k)\tau}\left(1-e^{-\sigma \tau}\right) + \epsilon^{0}_{i}e^{-\sigma (n-1) \tau}, ~ n>1.
\end{split}
\end{equation*}

We next prove the scheme is unconditionally stable by the mathematical induction.

Let $E^{n} = [\epsilon^{n}_{0},\epsilon^{n}_{1},\cdots,\epsilon^{n}_{N_x}]$ and $ |\epsilon^{n}_{i_0}|:=||E^{n}||_{\infty} = \max \limits_{0 \leq i \leq N_x} {|\epsilon^{n}_{i}|}$.
From  Lemma \ref{lemma3.3}, we obtain
\begin{equation*}
\begin{split}
||E^{1}||_{\infty}
& = |\epsilon^{1}_{i_0}| \leq |\epsilon^{1}_{i_0}| -\kappa_{\Delta x,\tau} \sum ^{N_x}_{m=0} { g^{\alpha}_{i_0,m} |\epsilon^{1}_{i_0}}|
 = |\epsilon^{1}_{i_0}| -\kappa_{\Delta x,\tau}{ g^{\alpha}_{i_0,i_0} |\epsilon^{1}_{i_0}}|
         -\kappa_{\Delta x,\tau} \sum ^{N_x}_{m=0,m \neq i_0} { g^{\alpha}_{i_0,m} |\epsilon^{1}_{i_0}}| \\
& \leq \left(1 -\kappa_{\Delta x,\tau} g^{\alpha}_{i_0,i_0}   \right) |\epsilon^{1}_{i_0}|
         -\kappa_{\Delta x,\tau} \sum ^{N_x}_{m=0,m \neq i_0} { g^{\alpha}_{i_0,m} |\epsilon^{1}_{m}}|\\
& \leq \left|\left(1 -\kappa_{\Delta x,\tau} g^{\alpha}_{i_0,i_0}   \right) \epsilon^{1}_{i_0}
         -\kappa_{\Delta x,\tau} \sum ^{N_x}_{m=0,m \neq i_0} { g^{\alpha}_{i_0,m} \epsilon^{1}_{m}}\right|
= |\epsilon^{0}_{i_0}|e^{-\sigma (n-1) \tau}\leq ||E^{0}||_{\infty}.
\end{split}
\end{equation*}
Assuming $||E^s||_{\infty} \leq ||E^{0}||_{\infty}$, $s = 1, 2,3,\cdots,n-1 $, we have
\begin{equation*}
\begin{split}
||E^{n}||_{\infty}
& = |\epsilon^{n}_{i_0}| \leq |\epsilon^{n}_{i_0}| -\kappa_{\Delta x,\tau} \sum ^{N_x}_{m=0} { g^{\alpha}_{i_0,m} |\epsilon^{n}_{i_0}}|
 = |\epsilon^{n}_{i_0}| -\kappa_{\Delta x,\tau}{ g^{\alpha}_{i_0,i_0} |\epsilon^{n}_{i_0}}|
         -\kappa_{\Delta x,\tau} \sum ^{N_x}_{m=0,m \neq i_0} { g^{\alpha}_{i_0,m} |\epsilon^{n}_{i_0}}| \\
& \leq \left(1 -\kappa_{\Delta x,\tau} g^{\alpha}_{i_0,i_0}   \right) |\epsilon^{n}_{i_0}|
         -\kappa_{\Delta x,\tau} \sum ^{N_x}_{m=0,m \neq i_0} { g^{\alpha}_{i_0,m} |\epsilon^{n}_{m}}|\\
& \leq \left|\left(1 -\kappa_{\Delta x,\tau} g^{\alpha}_{i_0,i_0}   \right) \epsilon^{n}_{i_0}
         -\kappa_{\Delta x,\tau} \sum ^{N_x}_{m=0,m \neq i_0} { g^{\alpha}_{i_0,m} \epsilon^{n}_{m}}\right|\\
& = \left| \sum^{n-1}_{k=1} {\epsilon^{k}_{i_0}} e^{-\sigma(n-1-k)\tau}\left(1-e^{-\sigma \tau}\right) + \epsilon^{0}_{i_0}e^{-\sigma (n-1) \tau}\right|\\
&\leq  \left| \sum^{n-1}_{k=1}  e^{-\sigma(n-1-k)\tau}\left(1-e^{-\sigma \tau}\right) +e^{-\sigma (n-1) \tau}\right| ||E^{0}||_{\infty}= ||E^{0}||_{\infty}.
\end{split}
\end{equation*}
The proof is completed.
\end{proof}

\begin{theorem}
Let $u^n_i$ be the approximate solution of $u(x_i,t_n)$ computed by the difference scheme (\ref{2.11}). Let $\varepsilon ^n_i = u(x_i,t_n)- u^n_i $. Then
 \begin{equation*}
    || \varepsilon^{n} || _{\infty} \leq ( 1 - \gamma ) \left(\sigma T+e^{ \sigma \tau}\right)  C_u(\tau^2+(\Delta x)^2),
 \end{equation*}
where $C_{u}$ is defined by (\ref{2.8}) and $(x_{i},t_{n}) \in (0,b) \times (0,T]$ with $N\tau \leq T$.
\end{theorem}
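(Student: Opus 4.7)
The plan is to mimic the mathematical-induction argument of Theorem~\ref{theorem3.3}, with the forcing role of $f^n_i$ in (\ref{2.11}) now played by the local truncation error. First, I subtract the exact-solution version of (\ref{2.11}) (which differs from (\ref{2.11}) only by the extra term $\tfrac{(1-\gamma)\sigma\tau}{1-e^{-\sigma\tau}} r^n_i$, with $|r^n_i|\le C_u(\tau^2+(\Delta x)^2)$ by (\ref{2.8})) from (\ref{2.11}) itself. Using $\varepsilon^0_i=0$, the resulting error equation reads
\begin{equation*}
\varepsilon^n_i-\kappa_{\Delta x,\tau}\sum_{m=0}^{N_x}g^\alpha_{i,m}\varepsilon^n_m
=\sum_{k=1}^{n-1}\varepsilon^k_i\, e^{-\sigma(n-1-k)\tau}\bigl(1-e^{-\sigma\tau}\bigr)
+\frac{(1-\gamma)\sigma\tau}{1-e^{-\sigma\tau}}\,r^n_i.
\end{equation*}

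Next, I pick $i_0$ with $|\varepsilon^n_{i_0}|=\|E^n\|_\infty$ and run exactly the same chain of inequalities used in Theorem~\ref{theorem3.3}, invoking Lemma~\ref{lemma3.3} to handle the positivity of $1-\kappa_{\Delta x,\tau}g^\alpha_{i_0,i_0}$ and the sign pattern of $g^\alpha_{i_0,m}$ (with the reverse triangle inequality absorbing the off-diagonal terms). This yields the discrete Gronwall-type inequality
\begin{equation*}
\|E^n\|_\infty \le (1-e^{-\sigma\tau})\sum_{k=1}^{n-1}\|E^k\|_\infty\, e^{-\sigma(n-1-k)\tau}+c_0,
\qquad c_0:=\frac{(1-\gamma)\sigma\tau}{1-e^{-\sigma\tau}}\,C_u(\tau^2+(\Delta x)^2).
\end{equation*}

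The core of the proof is then to convert this recursive inequality into an explicit bound. I introduce the majorizing sequence $\{a_n\}$ defined by the same recurrence with equality and $a_1=c_0$, and observe by an easy monotonicity induction (the weights are non-negative) that $\|E^n\|_\infty\le a_n$. The key algebraic identity is the telescoping relation
\begin{equation*}
a_{n+1}-a_n=c_0\bigl(1-e^{-\sigma\tau}\bigr),
\end{equation*}
obtained by splitting off the $k=n$ term in the recurrence for $a_{n+1}$, factoring $e^{-\sigma\tau}$ out of the remaining sum, and recognizing the residual sum as $a_n-c_0$ via the recurrence for $a_n$. This collapses to the closed form
\begin{equation*}
a_n=c_0\bigl[1+(n-1)(1-e^{-\sigma\tau})\bigr]
=\frac{(1-\gamma)\sigma\tau}{1-e^{-\sigma\tau}}C_u(\tau^2+(\Delta x)^2)+(1-\gamma)\sigma(n-1)\tau\, C_u(\tau^2+(\Delta x)^2).
\end{equation*}

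To finish, I apply two elementary inequalities: $\tfrac{\sigma\tau}{1-e^{-\sigma\tau}}\le e^{\sigma\tau}$ (equivalent to $e^{\sigma\tau}\ge 1+\sigma\tau$) and $(n-1)\tau\le N\tau\le T$. Both follow at once and give the stated bound $(1-\gamma)(\sigma T+e^{\sigma\tau})\,C_u(\tau^2+(\Delta x)^2)$. The main obstacle is precisely the telescoping step: without it, a naive induction of the form $\|E^n\|_\infty\le n\,c_0$ only delivers an $\mathcal{O}\bigl((T/\tau)(\tau^2+(\Delta x)^2)\bigr)$ estimate and loses one order of convergence in~$\tau$. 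Recognizing that the convolution structure of the inequality is preserved under the shift $n\mapsto n+1$ is what converts the superficially linear-in-$n$ growth into the harmless additive term $\sigma T$, and preserves the full second-order accuracy.
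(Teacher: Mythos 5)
Your proposal is correct and follows essentially the same route as the paper: the same error equation, the same $L_\infty$ maximum-principle estimate via Lemma~\ref{lemma3.3}, and the same closed form $c_0\bigl[1+(n-1)(1-e^{-\sigma\tau})\bigr]$, which is exactly the paper's induction hypothesis $\frac{(s-1)(1-e^{-\sigma\tau})+1}{1-e^{-\sigma\tau}}(1-\gamma)\sigma\tau\,r_{\max}$. Your telescoping derivation of the majorant $a_n$ is just a cleaner repackaging of the paper's verification that $\Phi_\tau$ reproduces that hypothesis at step $n$, and the concluding inequalities $\sigma\tau/(1-e^{-\sigma\tau})\le e^{\sigma\tau}$ and $(n-1)\tau\le T$ coincide with the paper's.
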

\begin{proof}
Let $u(x_{i},t_{n})$ be the exact solution of (\ref{2.0016}) at the mesh point $(x_{i},t_{n})$. Defined $\varepsilon ^{n}_{i} = u(x_i,t_n)- u^n_i $
and $\varepsilon^n = [\varepsilon ^{n}_{0},\varepsilon ^{n}_{1},\cdots, \varepsilon ^{n}_{N_x}]$.
Subtracting (\ref{2.7}) from (\ref{2.11}) with $\varepsilon ^{0}_{i} = 0$, we obtain
  \begin{equation*}
\begin{split}
& \left( 1 -\kappa_{\Delta x,\tau}  g^{\alpha}_{i,i} \right) \epsilon^{1}_{i} -\kappa_{\Delta x,\tau} \sum ^{N_x}_{m=0,m \neq i} { g^{\alpha}_{i,m} \epsilon^{1}_{m}}
= \frac{(1-\gamma)\sigma \tau}{1-e^{-\sigma \tau}}r^{1}_{i}, ~~~~ n=1, \\
&\left( 1 -\kappa_{\Delta x,\tau}  g^{\alpha}_{i,i} \right)\epsilon^{n}_{i} \!-\!\kappa_{\Delta x,\tau} \!\!\sum ^{N_x}_{m=0,m \neq i} { g^{\alpha}_{i,m} \epsilon^{n}_{m}}
       =  \sum^{n-1}_{k=1} {\epsilon^{k}_{i}} e^{-\sigma(n-1-k)\tau}\left(1-e^{-\sigma \tau}\right) \!+\!\frac{(1-\gamma)\sigma \tau}{1-e^{-\sigma \tau}}r^{n}_{i}, ~ n>1.
\end{split}
\end{equation*}

We next prove the desired results  by the mathematical induction.

Let  $ |\epsilon^{n}_{i_0}|:=||\epsilon^{n}||_{\infty} = \max \limits_{0 \leq i \leq N_x} {|\epsilon^{n}_{i}|}$
and $r_{\max} = \max\limits_{0 \leq i \leq N_x,0 \leq n \leq N}{|r^{n}_{i}|}$. Using  Lemma \ref{lemma3.3}, we have
\begin{equation*}
\begin{split}
||\epsilon^{1}||_{\infty}
& = |\epsilon^{1}_{i_0}| \leq |\epsilon^{1}_{i_0}| -\kappa_{\Delta x,\tau} \sum ^{N_x}_{m=0} { g^{\alpha}_{i_0,m} |\epsilon^{1}_{i_0}}|
 = |\epsilon^{1}_{i_0}| -\kappa_{\Delta x,\tau}{ g^{\alpha}_{i_0,i_0} |\epsilon^{1}_{i_0}}|
         -\kappa_{\Delta x,\tau} \sum ^{N_x}_{m=0,m \neq i_0} { g^{\alpha}_{i_0,m} |\epsilon^{1}_{i_0}}| \\
& \leq \left(1 -\kappa_{\Delta x,\tau} g^{\alpha}_{i_0,i_0}   \right) |\epsilon^{1}_{i_0}|
         -\kappa_{\Delta x,\tau} \sum ^{N_x}_{m=0,m \neq i_0} { g^{\alpha}_{i_0,m} |\epsilon^{1}_{m}}|\\
& \leq \left|\left(1 -\kappa_{\Delta x,\tau} g^{\alpha}_{i_0,i_0}   \right) \epsilon^{1}_{i_0}
         -\kappa_{\Delta x,\tau} \sum ^{N_x}_{m=0,m \neq i_0} { g^{\alpha}_{i_0,m} \epsilon^{1}_{m}}\right|
= \left|\frac{(1-\gamma)\sigma \tau}{1-e^{-\sigma \tau}}r^{1}_{i_0}\right|\leq \frac{(1-\gamma)\sigma \tau}{1-e^{-\sigma \tau}}r_{\max}.
\end{split}
\end{equation*}
Supposing  $  || e^{n} || _{\infty} = |\varepsilon ^{n}_{i_0}| = \max\limits_{0 \leq i \leq N_x}{|\varepsilon ^{n}_{i}|}  $ and
$$ || e^{s } || _{\infty} \leq \frac{ (s-1) \left(1-e^{- \sigma \tau} \right)+1}{ 1 - e^{- \sigma \tau}}  ( 1 - \gamma ) \sigma \tau  r_{\max},~~ s = 1,2,3,\cdots,n-1,$$
we have
\begin{equation*}
\begin{split}
||\epsilon^{n}||_{\infty}
& = |\epsilon^{n}_{i_0}| \leq |\epsilon^{n}_{i_0}| -\kappa_{\Delta x,\tau} \sum ^{N_x}_{m=0} { g^{\alpha}_{i_0,m} |\epsilon^{n}_{i_0}}|
 = |\epsilon^{n}_{i_0}| -\kappa_{\Delta x,\tau}{ g^{\alpha}_{i_0,i_0} |\epsilon^{n}_{i_0}}|
         -\kappa_{\Delta x,\tau} \sum ^{N_x}_{m=0,m \neq i_0} { g^{\alpha}_{i_0,m} |\epsilon^{n}_{i_0}}| \\
& \leq \left(1 -\kappa_{\Delta x,\tau} g^{\alpha}_{i_0,i_0}   \right) |\epsilon^{n}_{i_0}|
         -\kappa_{\Delta x,\tau} \sum ^{N_x}_{m=0,m \neq i_0} { g^{\alpha}_{i_0,m} |\epsilon^{n}_{m}}|\\
& \leq \left|\left(1 -\kappa_{\Delta x,\tau} g^{\alpha}_{i_0,i_0}   \right) \epsilon^{n}_{i_0}
         -\kappa_{\Delta x,\tau} \sum ^{N_x}_{m=0,m \neq i_0} { g^{\alpha}_{i_0,m} \epsilon^{n}_{m}}\right|\\
& = \left| \sum^{n-1}_{k=1} {\epsilon^{k}_{i_0}} e^{-\sigma(n-1-k)\tau}\left(1-e^{-\sigma \tau}\right) +\frac{(1-\gamma)\sigma \tau}{1-e^{-\sigma \tau}}r^{n}_{i_0}\right|
 \leq \Phi_\tau \cdot r_{\max}
\end{split}
\end{equation*}
with
\begin{equation} \label{3.004}
\begin{split}
\Phi_\tau
&= \sum^{n-1}_{k=1} \frac{ k - ( k - 1)e^{- \sigma \tau} }{ 1 - e^{- \sigma \tau}}  ( 1 - \gamma ) \sigma \tau
\cdot e^{-\sigma(n-1-k)\tau}\left(1-e^{-\sigma \tau}\right) +\frac{(1-\gamma)\sigma \tau}{1-e^{-\sigma \tau}}\\
&=\frac{ (n-1) \left(1-e^{- \sigma \tau} \right)+1}{ 1 - e^{- \sigma \tau}}  ( 1 - \gamma ) \sigma \tau < ( 1 - \gamma ) \sigma T+e^{\theta \sigma \tau}(1-\gamma),
\end{split}
\end{equation}
where we use $e^x=1+e^{\theta x}x, ~~0<\theta<1$.

 According to (\ref{2.8}), we can get
 \begin{equation*}
    || \epsilon^{n} || _{\infty} \leq ( 1 - \gamma ) \left(\sigma T+e^{\theta \sigma \tau}\right) r_{max} \leq ( 1 - \gamma ) \left(\sigma T+e^{ \sigma \tau}\right)  C_u(\tau^2+(\Delta x)^2).
 \end{equation*}
 The proof is completed.
  \end{proof}

\subsection{Convergence and stability for 2D  }
In this subsection, the stability and convergence are obtained  by mathematical induction with $L_\infty$ norm.
\begin{theorem} \label{theorem3.3}
The difference scheme (\ref{2.16}) is unconditionally stable.
\end{theorem}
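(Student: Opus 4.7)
\medskip
\noindent\textbf{Proof proposal.}
The plan is to mimic the one-dimensional stability argument of Theorem~3.3 and extend the mathematical-induction/maximum-principle technique to the tensor-product structure of (\ref{2.16}). Let $\widetilde u^{n}_{i,j}$ be the numerical solution of (\ref{2.16}) produced by a perturbed initial datum $\widetilde u^{0}_{i,j}$ with the right-hand side $f^{n}_{i,j}$ unchanged, and set
$\epsilon^{n}_{i,j}=\widetilde u^{n}_{i,j}-u^{n}_{i,j}$, $E^{n}=[\epsilon^{n}_{i,j}]$, and $\|E^{n}\|_{\infty}=\max_{0\le i\le N_x,\,0\le j\le N_y}|\epsilon^{n}_{i,j}|$. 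By linearity the error satisfies
\begin{equation*}
 \epsilon^{n}_{i,j}
 -\kappa^{\alpha}_{\Delta x,\tau}\!\!\sum_{m=0}^{N_x}g^{\alpha}_{i,m}\epsilon^{n}_{m,j}
 -\kappa^{\beta}_{\Delta y,\tau}\!\!\sum_{m=0}^{N_y}g^{\beta}_{j,m}\epsilon^{n}_{i,m}
 =\sum_{k=1}^{n-1}\epsilon^{k}_{i,j}e^{-\sigma(n-1-k)\tau}(1-e^{-\sigma\tau})
 +\epsilon^{0}_{i,j}e^{-\sigma(n-1)\tau},
\end{equation*}
with zero boundary values.

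Let $(i_0,j_0)$ be an index where the maximum $\|E^{n}\|_{\infty}=|\epsilon^{n}_{i_0,j_0}|$ is attained and consider the equation above at $(i_0,j_0)$. I will bound the left-hand side from below in absolute value by separating the diagonal and off-diagonal parts in each direction: by Lemma \ref{lemma3.3} we have $g^{\alpha}_{i_0,i_0}<0$, $g^{\beta}_{j_0,j_0}<0$, and $g^{\alpha}_{i_0,m},g^{\beta}_{j_0,m}>0$ off the diagonal, while $\kappa^{\alpha}_{\Delta x,\tau},\kappa^{\beta}_{\Delta y,\tau}>0$. Hence the reverse triangle inequality yields
\begin{equation*}
\bigl|\text{LHS}\bigr|
\ge \Bigl(1-\kappa^{\alpha}_{\Delta x,\tau}g^{\alpha}_{i_0,i_0}-\kappa^{\beta}_{\Delta y,\tau}g^{\beta}_{j_0,j_0}\Bigr)|\epsilon^{n}_{i_0,j_0}|
 -\kappa^{\alpha}_{\Delta x,\tau}\!\!\sum_{m\neq i_0} g^{\alpha}_{i_0,m}|\epsilon^{n}_{m,j_0}|
 -\kappa^{\beta}_{\Delta y,\tau}\!\!\sum_{m\neq j_0} g^{\beta}_{j_0,m}|\epsilon^{n}_{i_0,m}|.
\end{equation*}
Replacing $|\epsilon^{n}_{m,j_0}|$ and $|\epsilon^{n}_{i_0,m}|$ by $|\epsilon^{n}_{i_0,j_0}|$ and invoking the weak diagonal dominance $-g^{\alpha}_{i_0,i_0}\ge\sum_{m\neq i_0}g^{\alpha}_{i_0,m}$ and its $\beta$ counterpart from Lemma \ref{lemma3.3}(2), the whole coefficient in front of $|\epsilon^{n}_{i_0,j_0}|$ is at least $1$, so $|\text{LHS}|\ge\|E^{n}\|_{\infty}$.

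The rest is the induction. For the base case $n=1$ the right-hand side equals $\epsilon^{0}_{i_0,j_0}$, so $\|E^{1}\|_{\infty}\le|\epsilon^{0}_{i_0,j_0}|\le\|E^{0}\|_{\infty}$. For the inductive step, assume $\|E^{s}\|_{\infty}\le\|E^{0}\|_{\infty}$ for $s=1,\dots,n-1$; combining the lower bound on $|\text{LHS}|$ with the triangle inequality on the right-hand side and the telescoping identity
\begin{equation*}
\sum_{k=1}^{n-1} e^{-\sigma(n-1-k)\tau}(1-e^{-\sigma\tau})+e^{-\sigma(n-1)\tau}=1
\end{equation*}
gives $\|E^{n}\|_{\infty}\le\|E^{0}\|_{\infty}$, and the induction is complete. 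The main obstacle, compared to the 1D proof, is handling the two independent index sums simultaneously: the argument works only because the diagonal dominance from Lemma \ref{lemma3.3}(2) is available separately for the $x$- and $y$-direction coefficients, and the two positive factors $\kappa^{\alpha}_{\Delta x,\tau}$ and $\kappa^{\beta}_{\Delta y,\tau}$ never interact in a way that could spoil the inequality.
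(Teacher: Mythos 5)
Your proposal is correct and follows essentially the same route as the paper: a maximum-principle/induction argument at the maximizing index $(i_0,j_0)$, using the sign and diagonal-dominance properties of Lemma \ref{lemma3.3} separately in each direction together with the telescoping identity $\sum_{k=1}^{n-1}e^{-\sigma(n-1-k)\tau}(1-e^{-\sigma\tau})+e^{-\sigma(n-1)\tau}=1$. The only cosmetic difference is that you bound $|\mathrm{LHS}|$ from below by $\|E^{n}\|_{\infty}$ directly via the reverse triangle inequality, whereas the paper writes the equivalent chain of inequalities starting from $\|E^{n}\|_{\infty}$.
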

\begin{proof}
Let $ \widetilde u^{n}_{i,j} $ be the approximate solution of $u^{n}_{i,j}$, which is the exact solution of the difference scheme (\ref{2.16}).
Denoting $ \epsilon ^{n}_{i,j} = \widetilde u^{n}_{i,j}- u^{n}_{i,j} $, there exists
\begin{equation*}
\begin{split}
& \left( 1 -\kappa^{\alpha}_{\Delta x,\tau}  g^{\alpha}_{i,i} -\kappa^{\beta}_{\Delta y,\tau}  g^{\beta}_{i,i}\right) \epsilon^{1}_{i,j} -\kappa^{\alpha}_{\Delta x,\tau} \sum ^{N_x}_{m=0,m \neq i} { g^{\alpha}_{i,m} \epsilon^{1}_{m,j}}- \kappa^{\beta}_{\Delta y,\tau} \sum ^{N_y}_{m=0,m \neq j} { g^{\alpha}_{j,m} \epsilon^{1}_{i,m}}\\
&\quad= \epsilon^{0}_{i,j}e^{-\sigma (n-1) \tau}, ~~~~ n=1, \\
& \left( 1 -\kappa^{\alpha}_{\Delta x,\tau}  g^{\alpha}_{i,i} -\kappa^{\beta}_{\Delta y,\tau}  g^{\beta}_{i,i}\right) \epsilon^{n}_{i,j} -\kappa^{\alpha}_{\Delta x,\tau} \sum ^{N_x}_{m=0,m \neq i} { g^{\alpha}_{i,m} \epsilon^{n}_{m,j}}- \kappa^{\beta}_{\Delta y,\tau} \sum ^{N_y}_{m=0,m \neq j} { g^{\alpha}_{j,m} \epsilon^{n}_{i,m}}\\
&\quad=  \sum^{n-1}_{k=1} {\epsilon^{k}_{i,j}} e^{-\sigma(n-1-k)\tau}\left(1-e^{-\sigma \tau}\right) + \epsilon^{0}_{i,j}e^{-\sigma (n-1) \tau}, ~ n>1.
\end{split}
\end{equation*}

We next prove the scheme is unconditionally stable by the mathematical induction.

Let  $ |\epsilon^{n}_{i_0,j_0}|:=||E^{n}||_{\infty} = \max \limits_{0 \leq i \leq N_x,0 \leq j \leq N_y} {|\epsilon^{n}_{i,j}|}$.
From  Lemma \ref{lemma3.3}, we obtain
\begin{equation*}
\begin{split}
&||E^{1}||_{\infty}
= |\epsilon^{1}_{i_0,j_0}| \leq |\epsilon^{1}_{i_0,j_0}| -\kappa^{\alpha}_{\Delta x,\tau} \sum ^{N_x}_{m=0} { g^{\alpha}_{i_0,m} |\epsilon^{1}_{i_0,j_0}}| - \kappa^{\beta}_{\Delta y,\tau} \sum ^{N_y}_{m=0} { g^{\beta}_{j_0,m} |\epsilon^{1}_{i_0,j_0}}| \\
&\leq \left(1 -\kappa^{\alpha}_{\Delta x,\tau} g^{\alpha}_{i_0,i_0}  -\kappa^{\beta}_{\Delta y,\tau} g^{\beta}_{j_0,j_0} \right) |\epsilon^{1}_{i_0}|
         -\kappa^{\alpha}_{\Delta x,\tau} \!\!\!\sum ^{N_x}_{m=0,m \neq i_0} { g^{\alpha}_{i_0,m} |\epsilon^{1}_{m,j_0}}|
         -\kappa^{\beta}_{\Delta y,\tau} \!\!\!\sum ^{N_y}_{m=0,m \neq j_0} { g^{\alpha}_{j_0,m} |\epsilon^{1}_{i_0,m}}|\\
&\leq \left|\left(1 -\kappa^{\alpha}_{\Delta x,\tau} g^{\alpha}_{i_0,i_0}  -\kappa^{\beta}_{\Delta y,\tau} g^{\beta}_{j_0,j_0} \right) \epsilon^{1}_{i_0}
         -\kappa^{\alpha}_{\Delta x,\tau} \sum ^{N_x}_{m=0,m \neq i_0} { g^{\alpha}_{i_0,m} \epsilon^{1}_{m,j_0}}
         -\kappa^{\beta}_{\Delta y,\tau} \sum ^{N_y}_{m=0,m \neq j_0} { g^{\alpha}_{j_0,m} \epsilon^{1}_{i_0,m}}\right| \\
&= |\epsilon^{0}_{i_0,j_0}|e^{-\sigma (n-1) \tau}\leq ||E^{0}||_{\infty}.
\end{split}
\end{equation*}
Assuming $||E^s||_{\infty} \leq ||E^{0}||_{\infty}$, $s = 1, 2,3,\cdots,n-1 $, we have
\begin{equation*}
\begin{split}
&||E^{n}||_{\infty}
= |\epsilon^{n}_{i_0,j_0}| \leq |\epsilon^{n}_{i_0,j_0}| -\kappa^{\alpha}_{\Delta x,\tau} \sum ^{N_x}_{m=0} { g^{\alpha}_{i_0,m} |\epsilon^{n}_{i_0,j_0}}| - \kappa^{\beta}_{\Delta y,\tau} \sum ^{N_y}_{m=0} { g^{\beta}_{j_0,m} |\epsilon^{n}_{i_0,j_0}}| \\
&\leq \left(1 -\kappa^{\alpha}_{\Delta x,\tau} g^{\alpha}_{i_0,i_0}  -\kappa^{\beta}_{\Delta y,\tau} g^{\beta}_{j_0,j_0} \right) |\epsilon^{n}_{i_0}|
         -\kappa^{\alpha}_{\Delta x,\tau}\!\! \sum ^{N_x}_{m=0,m \neq i_0} { g^{\alpha}_{i_0,m} |\epsilon^{n}_{m,j_0}}|
         -\kappa^{\beta}_{\Delta y,\tau} \!\!\sum ^{N_y}_{m=0,m \neq j_0} { g^{\alpha}_{j_0,m} |\epsilon^{n}_{i_0,m}}|\\
&\leq \left|\left(1 -\kappa^{\alpha}_{\Delta x,\tau} g^{\alpha}_{i_0,i_0}  -\kappa^{\beta}_{\Delta y,\tau} g^{\beta}_{j_0,j_0} \right) \epsilon^{n}_{i_0}
         -\kappa^{\alpha}_{\Delta x,\tau} \sum ^{N_x}_{m=0,m \neq i_0} { g^{\alpha}_{i_0,m} \epsilon^{n}_{m,j_0}}
         -\kappa^{\beta}_{\Delta y,\tau} \sum ^{N_y}_{m=0,m \neq j_0} { g^{\alpha}_{j_0,m} \epsilon^{n}_{i_0,m}}\right| \\
& = \left| \sum^{n-1}_{k=1} {\epsilon^{k}_{i_0,j_0}} e^{-\sigma(n-1-k)\tau}\left(1-e^{-\sigma \tau}\right) + \epsilon^{0}_{i_0,j_0}e^{-\sigma (n-1) \tau}\right|\\
&\leq  \left| \sum^{n-1}_{k=1}  e^{-\sigma(n-1-k)\tau}\left(1-e^{-\sigma \tau}\right) +e^{-\sigma (n-1) \tau}\right| ||E^{0}||_{\infty}= ||E^{0}||_{\infty}.
\end{split}
\end{equation*}
The proof is completed.
\end{proof}

\begin{theorem}
Let $u^{n}_{i,j}$ be the approximate solution of $u(x_i,y_j,t_n)$ computed by the difference scheme (\ref{2.16}). Let $\varepsilon ^{n}_{i,j} = u(x_i,y_j,t_n)- u^{n}_{i,j} $. Then
 \begin{equation*}
    || \epsilon^{n} || _{\infty} \leq ( 1 - \gamma ) \left(\sigma T+e^{ \sigma \tau}\right)  C_u\left(\tau^2+(\Delta x)^2+ (\Delta y)^2\right),
 \end{equation*}
where $C_{u}$ is defined by (\ref{2.8}) and $(x_{i},y_{j},t_{n}) \in \Omega \times (0,T]$ with $N\tau \leq T$.
\end{theorem}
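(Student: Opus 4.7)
The plan is to replicate the structure of the 1D $L_\infty$ convergence argument from the previous subsection, promoted to the two-dimensional grid. First I would subtract the consistency identity (\ref{2.13}) from the scheme (\ref{2.16}) using $\varepsilon^{0}_{i,j}=0$, and rearrange exactly as was done in the 1D case to obtain the error equation
\begin{equation*}
\begin{split}
&\bigl(1-\kappa^{\alpha}_{\Delta x,\tau}g^{\alpha}_{i,i}-\kappa^{\beta}_{\Delta y,\tau}g^{\beta}_{j,j}\bigr)\varepsilon^{n}_{i,j}
 -\kappa^{\alpha}_{\Delta x,\tau}\!\!\sum^{N_x}_{m=0,m\neq i}g^{\alpha}_{i,m}\varepsilon^{n}_{m,j}
 -\kappa^{\beta}_{\Delta y,\tau}\!\!\sum^{N_y}_{m=0,m\neq j}g^{\beta}_{j,m}\varepsilon^{n}_{i,m} \\
&\quad = \sum^{n-1}_{k=1}\varepsilon^{k}_{i,j}e^{-\sigma(n-1-k)\tau}(1-e^{-\sigma\tau})+\frac{(1-\gamma)\sigma\tau}{1-e^{-\sigma\tau}}r^{n}_{i,j},
\end{split}
\end{equation*}
with the $n=1$ equation lacking the memory sum. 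Here $|r^{n}_{i,j}|\leq C_u(\tau^2+(\Delta x)^2+(\Delta y)^2)$ by (\ref{2.14}), so set $r_{\max}:=\max_{i,j,n}|r^{n}_{i,j}|\leq C_u(\tau^2+(\Delta x)^2+(\Delta y)^2)$.

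Next I would carry out induction on $n$, with $|\varepsilon^{n}_{i_0,j_0}|=\|\varepsilon^{n}\|_{\infty}$ attained at some $(i_0,j_0)$. The key observation is that Lemma~\ref{lemma3.3} applied in each coordinate gives $g^{\alpha}_{i,i}<0$, $g^{\beta}_{j,j}<0$, $g^{\alpha}_{i,m}>0$, $g^{\beta}_{j,m}>0$ for $m\neq i,j$, and row-wise diagonal dominance $-g^{\alpha}_{i,i}\geq\sum_{m\neq i}g^{\alpha}_{i,m}$; since $\kappa^{\alpha}_{\Delta x,\tau},\kappa^{\beta}_{\Delta y,\tau}>0$, the factor $1-\kappa^{\alpha}_{\Delta x,\tau}g^{\alpha}_{i_0,i_0}-\kappa^{\beta}_{\Delta y,\tau}g^{\beta}_{j_0,j_0}\geq 1$, and the $x$-sum plus the $y$-sum obey the standard maximum-principle estimate. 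Thus, exactly as in the 1D case,
\begin{equation*}
\|\varepsilon^{n}\|_{\infty} \leq \Bigl|\sum^{n-1}_{k=1}\varepsilon^{k}_{i_0,j_0}e^{-\sigma(n-1-k)\tau}(1-e^{-\sigma\tau})+\frac{(1-\gamma)\sigma\tau}{1-e^{-\sigma\tau}}r^{n}_{i_0,j_0}\Bigr|.
\end{equation*}

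For the base case this already yields $\|\varepsilon^{1}\|_{\infty}\leq \frac{(1-\gamma)\sigma\tau}{1-e^{-\sigma\tau}}r_{\max}$. Assuming inductively that
\begin{equation*}
\|\varepsilon^{s}\|_{\infty}\leq\frac{(s-1)(1-e^{-\sigma\tau})+1}{1-e^{-\sigma\tau}}(1-\gamma)\sigma\tau\, r_{\max},\qquad s=1,\ldots,n-1,
\end{equation*}
I plug this into the memory sum and invoke the telescoping computation that gave (\ref{3.004}) to obtain the same $\Phi_\tau$ prefactor in front of $r_{\max}$. Applying $e^{\sigma\tau}=1+e^{\theta\sigma\tau}\sigma\tau$ with $0<\theta<1$ then bounds $\Phi_\tau\leq(1-\gamma)(\sigma T+e^{\sigma\tau})$, and combining with the bound on $r_{\max}$ finishes the proof.

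The main obstacle, if any, is checking that the maximum-principle step genuinely survives the extra coupling through the $y$-direction; this is routine because the two directional contributions to diagonal dominance are additive and of the same sign, so the only delicate inequality $|\varepsilon^{n}_{i_0,j_0}|\leq|\text{LHS}(i_0,j_0)|$ goes through verbatim. All remaining steps reduce to the same telescoping and Taylor estimate already performed in the 1D proof, so no new analytical ingredient is required beyond tracking the additional index.
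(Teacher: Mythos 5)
Your proposal is correct and follows essentially the same route as the paper's own proof: subtract (\ref{2.13}) from (\ref{2.16}), apply the maximum-principle argument at the maximizing index $(i_0,j_0)$ via Lemma~\ref{lemma3.3} with the two directional diagonal-dominance contributions added, and run the same induction with the prefactor $\Phi_\tau$ from (\ref{3.004}) and the Taylor bound $e^{x}=1+e^{\theta x}x$. No substantive differences to report.
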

\begin{proof}
Let $u(x_{i},y_j,t_{n})$ be the exact solution of (\ref{1.1}) at the mesh point $(x_{i},y_j,t_{n})$. Defined $\varepsilon ^{n}_{i,j} = u(x_i,y_j,t_n)- u^n_{i,j} $.
Subtracting (\ref{2.13}) from (\ref{2.16}) with $\varepsilon ^{0}_{i,j} = 0$, we obtain
\begin{equation*}
\begin{split}
& \left( 1 -\kappa^{\alpha}_{\Delta x,\tau}  g^{\alpha}_{i,i} -\kappa^{\beta}_{\Delta y,\tau}  g^{\beta}_{i,i}\right) \epsilon^{1}_{i,j} -\kappa^{\alpha}_{\Delta x,\tau} \sum ^{N_x}_{m=0,m \neq i} { g^{\alpha}_{i,m} \epsilon^{1}_{m,j}}- \kappa^{\beta}_{\Delta y,\tau} \sum ^{N_y}_{m=0,m \neq j} { g^{\alpha}_{j,m} \epsilon^{1}_{i,m}}\\
&\quad= \frac{(1-\gamma)\sigma \tau}{1-e^{-\sigma \tau}}r^{1}_{i,j}, ~~~~ n=1, \\
& \left( 1 -\kappa^{\alpha}_{\Delta x,\tau}  g^{\alpha}_{i,i} -\kappa^{\beta}_{\Delta y,\tau}  g^{\beta}_{i,i}\right) \epsilon^{n}_{i,j}
   - \kappa^{\alpha}_{\Delta x,\tau} \sum ^{N_x}_{m=0,m \neq i} { g^{\alpha}_{i,m} \epsilon^{n}_{m,j}}
   - \kappa^{\beta}_{\Delta y,\tau} \sum ^{N_y}_{m=0,m \neq j} { g^{\alpha}_{j,m} \epsilon^{n}_{i,m}}\\
&=  \sum^{n-1}_{k=1} {\epsilon^{k}_{i,j}} e^{-\sigma(n-1-k)\tau}\left(1-e^{-\sigma \tau}\right)
  + \frac{(1-\gamma)\sigma \tau}{1-e^{-\sigma \tau}}r^{n}_{i.j}, ~ n>1.
\end{split}
\end{equation*}
We next prove the desired results  by the mathematical induction.

Let  $ |\epsilon^{n}_{i_0,j_0}|:=||\epsilon^{n}||_{\infty} = \max \limits_{0 \leq i \leq N_x,0 \leq j \leq N_y} {|\epsilon^{n}_{i,j}|}$
and $r_{\max} = \max\limits_{0 \leq i \leq N_x,0 \leq j \leq N_y,0 \leq n \leq N}{|r^{n}_{i,j}|}$. Using  Lemma \ref{lemma3.3}, we have
\begin{equation*}
\begin{split}
&||\epsilon^{1}||_{\infty}
= |\epsilon^{1}_{i_0,j_0}| \leq |\epsilon^{1}_{i_0,j_0}| -\kappa^{\alpha}_{\Delta x,\tau} \sum ^{N_x}_{m=0} { g^{\alpha}_{i_0,m} |\epsilon^{1}_{i_0,j_0}}| - \kappa^{\beta}_{\Delta y,\tau} \sum ^{N_y}_{m=0} { g^{\beta}_{j_0,m} |\epsilon^{1}_{i_0,j_0}}| \\
&= |\epsilon^{1}_{i_0,j_0}|-\kappa^{\alpha}_{\Delta x,\tau}{ g^{\alpha}_{i_0,i_0}|\epsilon^{1}_{i_0,j_0}}|
-\kappa^{\alpha}_{\Delta x,\tau} \sum ^{N_x}_{m=0,m \neq i_0} { g^{\alpha}_{i_0,m} |\epsilon^{1}_{i_0,j_0}}|\\
&\quad -\kappa^{\beta}_{\Delta y,\tau}{ g^{\beta}_{j_0,j_0} |\epsilon^{1}_{i_0,j_0}}|
-\kappa^{\beta}_{\Delta y,\tau} \sum ^{N_y}_{m=0,m \neq j_0} { g^{\alpha}_{j_0,m} |\epsilon^{1}_{i_0,j_0}}|\\
&\leq \left(1 -\kappa^{\alpha}_{\Delta x,\tau} g^{\alpha}_{i_0,i_0}  -\kappa^{\beta}_{\Delta y,\tau} g^{\beta}_{j_0,j_0} \right) |\epsilon^{1}_{i_0,j_0}|
         -\kappa^{\alpha}_{\Delta x,\tau} \!\!\!\sum ^{N_x}_{m=0,m \neq i_0} \!\!{ g^{\alpha}_{i_0,m} |\epsilon^{1}_{m,j_0}}|
         -\kappa^{\beta}_{\Delta y,\tau} \!\!\!\sum ^{N_y}_{m=0,m \neq j_0} \!\!{ g^{\alpha}_{j_0,m} |\epsilon^{1}_{i_0,m}}|\\
&\leq \left|\left(1 -\kappa^{\alpha}_{\Delta x,\tau} g^{\alpha}_{i_0,i_0}  -\kappa^{\beta}_{\Delta y,\tau} g^{\beta}_{j_0,j_0} \right) \epsilon^{1}_{i_0,j_0}
         -\kappa^{\alpha}_{\Delta x,\tau} \sum ^{N_x}_{m=0,m \neq i_0} { g^{\alpha}_{i_0,m} \epsilon^{1}_{m,j_0}}
         -\kappa^{\beta}_{\Delta y,\tau} \sum ^{N_y}_{m=0,m \neq j_0} { g^{\alpha}_{j_0,m} \epsilon^{1}_{i_0,m}}\right| \\
&= \left|\frac{(1-\gamma)\sigma \tau}{1-e^{-\sigma \tau}}r^{1}_{i_0,j_0}\right|\leq \frac{(1-\gamma)\sigma \tau}{1-e^{-\sigma \tau}}r_{\max}.
\end{split}
\end{equation*}
Supposing $ || \epsilon^{n}||_{\infty}=|\varepsilon ^{n}_{i_0,j_0}|=\max\limits_{0\leq i\leq N_x,0\leq j\leq N_y}{|\varepsilon ^{n}_{i,j}|}  $ and
$$ ||\epsilon^{s } || _{\infty} \leq \frac{ (s-1) \left(1-e^{- \sigma \tau} \right)+1}{ 1 - e^{- \sigma \tau}}  ( 1 - \gamma ) \sigma \tau  r_{\max},~~ s = 1,2,3,\cdots,n-1,$$
we have
\begin{equation*}
\begin{split}
&||\epsilon^{n}||_{\infty}
= |\epsilon^{n}_{i_0,j_0}| \leq |\epsilon^{n}_{i_0,j_0}| -\kappa^{\alpha}_{\Delta x,\tau} \sum ^{N_x}_{m=0} { g^{\alpha}_{i_0,m} |\epsilon^{n}_{i_0,j_0}}| - \kappa^{\beta}_{\Delta y,\tau} \sum ^{N_y}_{m=0} { g^{\beta}_{j_0,m} |\epsilon^{n}_{i_0,j_0}}| \\
&= |\epsilon^{n}_{i_0,j_0}|-\kappa^{\alpha}_{\Delta x,\tau}{ g^{\alpha}_{i_0,i_0}|\epsilon^{n}_{i_0,j_0}}|
-\kappa^{\alpha}_{\Delta x,\tau} \sum ^{N_x}_{m=0,m \neq i_0} { g^{\alpha}_{i_0,m} |\epsilon^{n}_{i_0,j_0}}|\\
&\quad-\kappa^{\beta}_{\Delta y,\tau}{ g^{\beta}_{j_0,j_0} |\epsilon^{n}_{i_0,j_0}}|
-\kappa^{\beta}_{\Delta y,\tau} \sum ^{N_y}_{m=0,m \neq j_0} { g^{\alpha}_{j_0,m} |\epsilon^{n}_{i_0,j_0}}|\\
&\leq \left(1 -\kappa^{\alpha}_{\Delta x,\tau} g^{\alpha}_{i_0,i_0}  -\kappa^{\beta}_{\Delta y,\tau} g^{\beta}_{j_0,j_0} \right) |\epsilon^{n}_{i_0,j_0}|
         -\kappa^{\alpha}_{\Delta x,\tau} \!\!\!\sum ^{N_x}_{m=0,m \neq i_0}\!\!\! { g^{\alpha}_{i_0,m} |\epsilon^{n}_{m,j_0}}|
         -\kappa^{\beta}_{\Delta y,\tau} \!\!\!\sum ^{N_y}_{m=0,m \neq j_0} \!\!\!{ g^{\alpha}_{j_0,m} |\epsilon^{n}_{i_0,m}}|\\
&\leq \left|\left(1 -\kappa^{\alpha}_{\Delta x,\tau} g^{\alpha}_{i_0,i_0}  -\kappa^{\beta}_{\Delta y,\tau} g^{\beta}_{j_0,j_0} \right) \epsilon^{n}_{i_0,j_0}
         -\kappa^{\alpha}_{\Delta x,\tau} \sum ^{N_x}_{m=0,m \neq i_0} { g^{\alpha}_{i_0,m} \epsilon^{n}_{m,j_0}}
         -\kappa^{\beta}_{\Delta y,\tau} \sum ^{N_y}_{m=0,m \neq j_0} { g^{\alpha}_{j_0,m} \epsilon^{n}_{i_0,m}}\right| \\
& = \left| \sum^{n-1}_{k=1} {\epsilon^{k}_{i_0,j_0}} e^{-\sigma(n-1-k)\tau}\left(1-e^{-\sigma \tau}\right) + \epsilon^{0}_{i_0,j_0}e^{-\sigma (n-1) \tau}\right| \leq \Phi_\tau \cdot r_{\max},
\end{split}
\end{equation*}
where $\Phi_\tau$ is given by (\ref{3.004}).

 According to (\ref{2.14}), we can get
 \begin{equation*}
    || \epsilon^{n} || _{\infty} \leq ( 1 - \gamma ) \left(\sigma T+e^{\theta \sigma \tau}\right) r_{max}
    \leq ( 1 - \gamma ) \left(\sigma T+e^{ \sigma \tau}\right)  C_u\left(\tau^2+(\Delta x)^2+(\Delta y)^2\right).
 \end{equation*}
 The proof is completed.
  \end{proof}

\section{Numerical results}
In the section, we numerically verify the above theoretical results including convergence orders and numerical stability.  And the $l_\infty$ norm and the discrete  $L^2$-norm, respectively,  are used to measure the numerical errors.

\begin{example}\end{example}
Consider (\ref{2.0016}) on a finite domain with $0<x <1$,  $0<t \leq 1$,  and the forcing function is
\begin{equation*}
\begin{split}
   f(x,t)
    =& -\frac {\sigma} {1-\gamma} t e^{- \sigma t} x^{2} (1-x)^{2} + \frac{e^ {- \sigma t}}{2 \cos (\alpha \pi /2)} \frac{24}{\Gamma(5- \alpha)} (x^{4- \alpha} + (1-x)^{4- \alpha}) \\
   & - \frac{ e^ {- \sigma t} }{2 \cos (\alpha \pi /2)} \frac{12}{\Gamma(4- \alpha)} (x^{3- \alpha} + (1-x)^{3- \alpha}) \\
   & + \frac{ e^ {- \sigma t} }{2 \cos (\alpha \pi /2)} \frac{2}{\Gamma(3- \alpha)} (x^{2- \alpha} + (1-x)^{2- \alpha}),~~ \sigma=\frac{\gamma}{1-\gamma}
\end{split}
\end{equation*}
with the nonzero initial condition $ u(x,0) = x^{2}(1-x)^{2} $ and the homogeneous Dirichlet boundary conditions. The exact solution of the fractional PDEs  is
$$ u(x,t) = e^{- \sigma t } x^{2} (1-x)^{2}. $$

\begin{table}[h]\fontsize{9.5pt}{12pt}\selectfont
 \begin{center}
  \caption {The maximum errors and convergence orders for  (\ref{2.10}) with $\tau=\Delta x $.} \vspace{5pt}
\begin{tabular*}{\linewidth}{@{\extracolsep{\fill}}*{8}{c}}                                    \hline  
$l_\infty$ norm       &    $\tau$   & $\alpha=1.2$  &  Rate       & $\alpha=1.8$ &   Rate   \\\hline
                      &  ~~1/40  &  1.0686e-04   &             & 1.3426e-04   &            \\
  $ \gamma = 0.1 $    &  ~~1/80  &  2.9917e-05   &  1.8367     & 3.3543e-05   & 2.0009      \\
                      &  ~~1/160 &  7.9022e-06   &  1.9206     & 8.3559e-06   & 2.0051       \\
                      &  ~~1/320 &  2.0766e-06   &  1.9281     & 2.0766e-06   & 2.0086        \\\hline 
                      &  ~~1/40  &  4.4671e-05   &             & 6.0820e-05   &             \\
  $ \gamma = 0.5 $    &  ~~1/80  &  1.2415e-05   &  1.8473     & 1.5196e-05   & 2.0009       \\
                      &  ~~1/160 &  3.2730e-06   &  1.9234     & 3.7834e-06   & 2.0059        \\
                      &  ~~1/320 &  8.6348e-07   &  1.9224     & 9.3952e-07   & 2.0097         \\\hline 
                      &  ~~1/40  &  2.9977e-05   &             & 6.8820e-06   &             \\
  $ \gamma = 0.9 $    &  ~~1/80  &  7.4790e-06   &  2.0029     & 1.7237e-06   & 1.9973       \\
                      &  ~~1/160 &  1.8634e-06   &  2.0049     & 4.3057e-07   & 2.0012        \\
                      &  ~~1/320 &  4.6419e-07   &  2.0051     & 1.0735e-07   & 2.0040         \\\hline 
    \end{tabular*}\label{table:4.1}
  \end{center}
\end{table}
Tables  \ref{table:4.1} and \ref{table:4.2}  show that the schemes (\ref{2.10}) have the global truncation errors $\mathcal{O} (\tau^2+(\Delta x)^2)$  at time $T=1$.
Here the $l_\infty$ norm and the discrete  $L^2$-norm, respectively,  are used to measure the numerical errors  for   (\ref{2.10})  with  $\tau=\Delta x$.

\begin{table}[h]\fontsize{9.5pt}{12pt}\selectfont
 \begin{center}
  \caption {The discrete $L^2$-norm errors and convergence orders for  (\ref{2.10}) with $\tau=\Delta x $.} \vspace{5pt}
\begin{tabular*}{\linewidth}{@{\extracolsep{\fill}}*{8}{c}}                                    \hline  
$L^2$-norm       &    $\tau$   & $\alpha=1.2$  &  Rate       & $\alpha=1.8$ &   Rate   \\\hline
                      &  ~~1/40  &  6.6304e-05   &             & 8.9805e-05   &            \\
  $ \gamma = 0.1 $    &  ~~1/80  &  1.6925e-05   &  1.9700     & 2.2274e-05   & 2.0114      \\
                      &  ~~1/160 &  4.3290e-06   &  1.9670     & 5.5195e-06   & 2.0128       \\
                      &  ~~1/320 &  1.1060e-06   &  1.9687     & 1.3670e-06   & 2.0135        \\\hline 
                      &  ~~1/40  &  3.0386e-05   &             & 4.0823e-05   &             \\
  $ \gamma = 0.5 $    &  ~~1/80  &  7.6617e-06   &  1.9877     & 1.0123e-05   & 2.0118       \\
                      &  ~~1/160 &  1.9408e-06   &  1.9810     & 2.5056e-06   & 2.0144        \\
                      &  ~~1/320 &  4.9221e-07   &  1.9793     & 6.1953e-07   & 2.0159         \\\hline 
                      &  ~~1/40  &  2.2186e-05   &             & 4.9221e-06   &             \\
  $ \gamma = 0.9 $    &  ~~1/80  &  5.5359e-06   &  2.0027     & 1.2327e-06   & 1.9975       \\
                      &  ~~1/160 &  1.3791e-06   &  2.0050     & 3.0788e-07   & 2.0014        \\
                      &  ~~1/320 &  3.4350e-07   &  2.0054     & 7.6749e-08   & 2.0042         \\\hline 
    \end{tabular*}\label{table:4.2}
  \end{center}
\end{table}

\begin{example} \end{example}
Consider (\ref{1.1}) on a finite domain with $0<x <1$, $0<y<1$, $0<t \leq 1$, and the forcing function is
\begin{equation*}
\begin{split}
   f(x,y,t)
    =& -\frac {\sigma} {1-\gamma} t e^{- \sigma t} x^{2} (1-x)^{2}y^{2} (1-y)^{2}+\frac{e^ {- \sigma t}y^{2} (1-y)^{2}}{2 \cos (\alpha \pi /2)}\frac{24(x^{4- \alpha} + (1-x)^{4- \alpha})}{\Gamma(5- \alpha)}  \\
    &+ \frac{e^ {- \sigma t}}{2 \cos (\alpha \pi /2)} y^{2} (1-y)^{2}\left(\frac{2 (x^{2- \alpha} + (1-x)^{2- \alpha})}{\Gamma(3- \alpha)} - \frac{12 (x^{3- \alpha} + (1-x)^{3- \alpha})}{\Gamma(4- \alpha)}  \right)\\
   &+ \frac{e^ {- \sigma t}}{2 \cos (\alpha \pi /2)} x^{2} (1-x)^{2} \left(\frac{24(y^{4- \alpha} + (1-y)^{4- \alpha})}{\Gamma(5- \alpha)} -\frac{12(y^{3- \alpha} + (1-y)^{3- \alpha})}{\Gamma(4- \alpha)} \right) \\
   & + \frac{ e^ {- \sigma t} }{2 \cos (\alpha \pi /2)}x^{2} (1-x)^{2} \frac{2}{\Gamma(3- \alpha)} (y^{2- \alpha} + (1-y)^{2- \alpha}),~~ \sigma=\frac{\gamma}{1-\gamma}
\end{split}
\end{equation*}
with the nonzero initial condition $u(x,y,0)=x^{2} (1-x)^{2}y^{2} (1-y)^{2}$ and the homogeneous Dirichlet boundary conditions.
The exact solution of the (\ref{1.1}) is
$$u(x,y,t)=e^{-\sigma t}x^{2} (1-x)^{2}y^{2} (1-y)^{2}.$$

\begin{table}[h]\fontsize{9.5pt}{12pt}\selectfont
 \begin{center}
  \caption {The  maximum  errors and convergence orders for  (\ref{2.16}) with $\tau=\Delta x=\Delta y$.} \vspace{5pt}
\begin{tabular*}{\linewidth}{@{\extracolsep{\fill}}*{8}{c}}                                    \hline  
$l_\infty$ norm        &    $\tau$   & $\alpha=1.2,\beta=1.3$  &  Rate       & $\alpha=1.8,\beta=1.7$ &   Rate    \\\hline
                      &  ~~1/10  &  8.7959e-05   &             & 1.0126e-04   &            \\
  $ \gamma = 0.3 $    &  ~~1/20  &  2.1543e-05   &  2.0296     & 2.5618e-05   & 1.9828      \\
                      &  ~~1/40  &  5.2815e-06   &  2.0282     & 6.4708e-06   & 1.9851       \\
                      &  ~~1/80  & 1.3016e-06    &  2.0207     & 1.6713e-06   &  1.9530      \\\hline 
                      &  ~~1/10  &  2.2733e-05   &             & 1.8114e-05   &             \\
  $ \gamma = 0.7 $    &  ~~1/20  &  5.5809e-06   &  2.0262     & 4.5869e-06   & 1.9815       \\
                      &  ~~1/40  &  1.3680e-06   &  2.0284     & 1.1568e-06   & 1.9873        \\
                      &  ~~1/80  &  3.4094e-07   &  2.0045     & 2.9480e-07   & 1.9723         \\\hline 
    \end{tabular*}\label{table:4.3}
  \end{center}
\end{table}

Tables  \ref{table:4.3}  shows that the maximum error, at time  $T=1$ and $\tau=\Delta x=\Delta y$, between the exact analytical value and the numerical value.
The scheme  (\ref{2.16}) is second-order convergence and this is in agreement with the order of the truncation error.

\section{Conclusions}
As is well known,  there is less than the second-order convergence for  the  Caputo fractional derivative \cite{Lin:07,Oldham:74} with L1 formula.
We notice that there are already some theoretical convergence results for Caputo-Fabrizio fractional derivative \cite{Akman:18,Atangana:16} with L1 formula.
However, it seems that achieving a second-order accurate scheme (optimal estimates)  is not an easy task.
To  our knowledge, this is the first published finite difference method to consider the space fractional   diffusion  equations with the time   Caputo-Fabrizio fractional derivative.
The optimal estimates with the second-order convergence for L1 scheme are given by two methods.
We remark that the corresponding theoretical including  a prior estimate can also be extended to the nonzero initial values \cite{CSD:18,Ji:15}.

\section*{Acknowledgements}

This work was supported by NSFC 11601206.

\bibliographystyle{amsplain}

\begin{thebibliography}{10}


\bibitem{Abdulhameed:19}
Abdulhameed,  M., Muhammad,  M.M.,  Gital, A.Y., Yakubu, D.G., Khan, I.:
Effect of fractional derivatives on transient MHD flow and radiative heat transfer in a micro-parallel channel at high zeta potentials. Phys. A. \textbf{484},  42-71 (2019).

\bibitem{Abdulhameed:17} Abdulhameed,  M., Vieru, D.,  Roslan, R.:
Magnetohydrodynamic electroosmotic flow of Maxwell fluids with Caputo-Fabrizio derivatives through circular tubes. Comput. Math. Appl. \textbf{74},  2503-2519 (2017).


\bibitem{Akman:18}
Akman, T., Yildiz, B., Baleanu, D.:
New discretization of Caputo-Fabrizio derivative.
Comp. Appl. Math. \textbf{37}, 3307-3333 (2018).




\bibitem{Alkahtani:16}
Alkahtani, B.S.T., Atangana, A.:
Controlling the wave movement on the surface of shallow water with the Caputo-Fabrizio derivative with fractional order.
Chaos. Soliton. Fract. \textbf{89},  539-546  (2016).

\bibitem{Asif:18}
Asif, N.A.,   Hammouch, Z., Riaz, M.B.,  Bulut, H.:
Analytical solution of a Maxwell fluid with slip effects in view of the Caputo-Fabrizio derivative.
Eur. Phys. J. Plus \textbf{133}:272  (2018).




 \bibitem{Atanackovie:18}
Atanackovi\'e, T.M., Pilipovi\'e, S., Zorica, D.:
Properties of the Caputo-Fabrizio fractional derivative and its distributional settings.
Fract. Calc. Appl. Anal. \textbf{21},  29-44  (2018).


 \bibitem{Atangana:16} Atangana,  A.,  Alqahtani, R.T.:
Numerical approximation of the space-time Caputo-Fabrizio fractional derivative and application to groundwater pollution equation.
Adv. Differ. Equ. \textbf{2016},  1-13 (2016).


%
\bibitem{Atangana:18}
Atangana, A., Alkahtani, B.S.T.:
New model of groundwater flowing within a confine aquifer: application of Caputo-Fabrizio derivative.
Arab. J. Geosci. \textbf{9}, 1-6 (2016).

\bibitem{Caputo:15}
Caputo,M., Fabrizio, M.:
A new definition of fractional derivative without singular kernel.
Progr. Fract. Differ. Appl. \textbf{1},  73-85  (2015).


%
%

\bibitem{CD:14}  Chen, M.H.,  Deng, W.H.: Fourth order accurate scheme for the space fractional diffusion equations.
SIAM J. Numer. Anal. \textbf{ 52},    1418-1438  (2014).



\bibitem{CD:15}
Chen, M.H.,  Deng, W.H.: Discretized fractional substantial calculus.
 ESAIM: M2AN. \textbf{49},  373-394 (2015).


 \bibitem{Chen:014}
 Chen, M.H.,  Deng, W.H.:
 A second-order numerical method for two-dimensional two-sided space fractional convection diffusion equation.
 Appl. Math. Model.  \textbf{38}  3244-3259  (2014).



 \bibitem{CSD:18}
 Chen, M.H.,   Shi,J.K.,  Deng, W.H.:
 High order algorithms for Fokker-Planck equation with Caputo-Fabrizio fractional derivative.
arXiv:1809.03263.

 \bibitem{Chen:14}
 Chen, M.H.,   Wang, Y.T.,    Cheng, X.,  Deng, W.H.:
 Second-order LOD multigrid method for multidimensional Riesz fractional diffusion equation.
 BIT.  \textbf{54} 623-647  (2014).



\bibitem{Djida:17}
 Djida, J.D.,  Atangana, A.:
More generalized groundwater model with space-time Caputo Fabrizio fractional differentiation.
Numer. Meth. Part. D. E. \textbf{33}   1616-1627 (2017).


 \bibitem{Firoozjaee:18}
Firoozjaee, M.A., Jafari, H.,  Lia, A.,  Baleanu, D.:
Numerical approach of Fokker-Planck equation with Caputo-Fabrizio fractional derivative using Ritz approximation.
J. Comput. Appl. Math. \textbf{339}  367-373  (2018).

\bibitem{Ji:15}
 Ji,  C.C.,    Sun, Z.Z.:  A high-order compact finite difference schemes for the fractional sub-diffusion equation.
J. Sci. Comput.  \textbf{64},   959-985 (2015).










\bibitem{Lin:07}
Lin, Y.M., Xu, C.J.:
Finite difference/spectral approximations for the time-fractional diffusion equation.
J. Comput. Phys. \textbf{225}, 1533-1552  (2007).

\bibitem{Liu:17}
Liu, Z.G., Cheng, A.J., Li, X.L.:
A second order Crank-Nicolson scheme for fractional Cattaneo equation based on new fractional derivative.
Appl. Math. Comput. \textbf{311}, 361-374 (2017).



\bibitem{Loh:18}
Loh,J.R., Jafari,  H., Isah,  A., Phang, C., Toh, Y.T.:
On the new properties of Caputo-Fabrizio operator and its application in deriving shifted Lagendre operational matrix.
Appl. Numer.  Math. \textbf{132}, 138-153 (2018).


\bibitem{Lubich:86} Lubich, Ch.: Discretized fractional calculus.
SIAM J. Math. Anal. \textbf{17}, 704-719  (1986).

\bibitem{Mahsud:18}
 Mahsud, Y., Shah,  N.A., Vieru, D.:
 Natural convection flows and heat transfer with exponential memory of a Maxwell fluid with damped shear stress.
Comput. Math. Appl. \textbf{76},  2246-2261 (2018).

\bibitem{Oldham:74} Oldham, K.,   Spanier, J.: The Fractional Calculus: Theory and Applications of Differentiation and Integration to Arbitrary Order.
Academic Press,  (1974).

\bibitem{Podlubny:99}  Podlubny, I.:  Fractional Differential Equations.  Academic Press, (1999).



\bibitem{Quarteroni:07}
Quarteroni, A.,  Sacco,  R., Saleri,  F.:
 Numerical Mathematics.
Springer, (2007).



\bibitem{Shah:16}
Shah, N.A., Khan, I.:
Heat transfer analysis in a second grade fluid over and oscillating vertical plate using fractional Caputo-Fabrizio derivatives.
Eur. Phys. J. C \textbf{76}:362  (2016).





\bibitem{Ullah:18}
Ullan, S.,   Khan, M.A., Farooq, M.:
A new fractional model for the dynamics of the hepatitis B virus using the Caputo-Fabrizio derivative.
Eur. Phys. J. Plus  \textbf{133}:237  (2018).

\bibitem{Zhao:14}
Zhao, X.,  Sun, Z.Z.: Compact Crank-Nicolson schemes for a class of fractional Cattaneo equation in inhomogeneous medium. J. Sci. Comput.
\textbf{62}, 747-771 (2014).








\end{thebibliography}

\end{document}